\newtheorem{theorem}{Theorem}[section]
\newtheorem{theoremsub}{Theorem}[subsection]
\newtheorem{proposition}{Proposition}[section]
\newtheorem{corollary}{Corollary}[section]
\newtheorem{corollarysub}{Corollary}[subsection]
\newtheorem{remark}{Remark}[section]
\newtheorem{remarksub}{Remark}[subsection]
\newfont{\bb}{msbm10 at 12pt}
\def\Xep{X_{\varepsilon}}
\def\<{\langle}
\def\>{\rangle}
\newcommand{\bal}{\begin{align}}      \newcommand{\eal}{\end{align}}
\newcommand{\ba}{\begin{array}}      \newcommand{\ea}{\end{array}}
\newcommand{\bc}{\begin{center}}     \newcommand{\ec}{\end{center}}
\newcommand{\be}{\begin{enumerate}}  \newcommand{\ee}{\end{enumerate}}
\newcommand{\beQ}{\begin{eqnarray*}} \newcommand{\eeQ}{\end{eqnarray*}}
\newcommand{\bi}{\begin{itemize}}    \newcommand{\ei}{\end{itemize}}
\newcommand{\bt}{\begin{tabular}}    \newcommand{\et}{\end{tabular}}
\newcommand{\bdm}{\begin{displaymath}} \newcommand{\edm}{\end{displaymath}}
\newcommand{\D}{D\!\!\!\!/\,}
\newcommand{\RSB}{S\!\!\!\!/\,}
\newcommand{\Ss}{R\!\!\!\!/\,}
\newcommand{\ovg}{\overline{g}}
\newcommand{\ovgf}{\overline{\mathfrak{g}}}
\newcommand{\hatgf}{\widehat{\mathfrak{g}}}
\newcommand{\hatg}{\widehat{g}}
\newcommand{\LSs}{\Delta\!\!\!\!/\,}
\newcommand{\nbs}{\nabla\!\!\!\!/\,}
\newcommand{\mult}{\gamma\!\!\!/}
\begin{document}
\title[Scalar flat compactifications of Poincar\'e-Einstein manifolds]{Scalar flat compactifications of Poincar\'e-Einstein manifolds and applications}

\author{Simon Raulot}
\address{Laboratoire de Math\'ematiques R. Salem
UMR $6085$ CNRS-Universit\'e de Rouen
Avenue de l'Universit\'e, BP.$12$
Technop\^ole du Madrillet
$76801$ Saint-\'Etienne-du-Rouvray, France.}
\email{simon.raulot@univ-rouen.fr}

\begin{abstract}
We derive an integral inequality between the mean curvature and the scalar curvature of the boundary of any scalar flat conformal compactifications of Poincar\'e-Einstein manifolds. As a first consequence, we obtain a sharp lower bound for the first eigenvalue of the conformal half-Laplacian of the boundary of such manifolds. Secondly, a new upper bound for the renormalized volume is given in the four dimensional setting. Finally, some estimates on the first eigenvalues of Dirac operators are also deduced. 
\end{abstract}

\subjclass[2010]{Differential Geometry, Global Analysis, 53C24, 53C27, 53C80, 58J50}
\date{\today}
\maketitle 
\pagenumbering{arabic}

%%%%%%%%%%%%%%%%%%%%%%%%%%%%%%%%%%%%%%%%%%%%%%%%%%%%%%%%%%%%%%%%%%%%%%%%%%%%%%%%%%%%%%%%%%%%%%%%%%%%%%%%%

\section{Introduction}

%%%%%%%%%%%%%%%%%%%%%%%%%%%%%%%%%%%%%%%%%%%%%%%%%%%%%%%%%%%%%%%%%%%%%%%%%%%%%%%%%%%%%%%%%%%%%%%%%%%%%%%%%

In the seminal work \cite{GrahamZworski}, Graham and Zworski use scattering theory to study a family of conformally covariant operators defined on the boundary at infinity $(\partial X, [\ovgf])$ of a $(n+1)$-dimensional Poincar\'e-Einstein manifold $(X,g_+)$. These operators denoted by $P^{\alpha}_{\hatgf}$ for $\hatgf\in[\ovgf]$ are usually called the fractional conformal Laplacians and define pseudo-differential self-adjoint operators of order $2\alpha$ for suitable $\alpha\in\mathbb{C}$. The simplest example of a Poincar\'e-Einstein manifold is the hyperbolic space $(\mathbb{H}^{n+1},g_{\mathbb{H}})$ whose boundary at infinity is the M\"obius sphere $(\mathbb{S}^n,[\mathfrak{g}_{\mathbb{S}}])$. Let us explain the construction of these operators when we consider the upper half-space model of the hyperbolic space
\begin{eqnarray*}
\Big(\mathbb{R}^{n+1}_+:=\big\{(x,y)\in\mathbb{R}^n\times\mathbb{R}\,/\,y>0\big\},g_\mathbb{H}=y^2g_{\mathbb{R}}\Big)
\end{eqnarray*} 
where $g_{\mathbb{R}}$ is the Euclidean metric. To avoid technical difficulties, we assume that $0<\alpha<1$ and we refer to Section \ref{HalfConfLaplacian} for the general case. For a function $f:\mathbb{R}^n\rightarrow\mathbb{R}$, the eigenvalue problem
\begin{eqnarray}\label{EigenvalueLaplacian}
-\Delta_{g_\mathbb{H}} u-s(n-s)u = 0 \quad\text{in}\quad\mathbb{H}^{n+1},
\end{eqnarray}
where $\Delta_{g_\mathbb{H}}$ is the Laplacian on the hyperbolic space and $s=\frac{n}{2}+\alpha$, admits a solution of the form
\begin{eqnarray*}
u=Fy^{n-s}+Gy^s, \quad F,G\in C^{\infty}(X),\quad F_{|y=0}=f.
\end{eqnarray*}
If $\mathfrak{g}_{\mathbb{R}}$ denotes the restriction to $\partial\mathbb{R}^{n+1}_+$ of $g_{\mathbb{R}}$, the maps
\begin{eqnarray}\label{FractionalLaplacianGZ}
P^{\alpha}_{\mathfrak{g}_{\mathbb{R}}}f:=d_\alpha S\Big(\frac{n}{2}+\alpha\Big)f,\quad d_\alpha:=2^{2\alpha}\frac{\Gamma(\alpha)}{\Gamma(-\alpha)}
\end{eqnarray}
where $S(s)f=G_{|y=0}$ is the scattering operator, define a family of conformally covariant self-adjoint pseudo-differential operators of order $2\alpha$ on $(\mathbb{R}^n,\mathfrak{g}_{\mathbb{R}})$.

A typical example of such an operator on $\mathbb{R}^n$ is the fractional Laplacian. This is a non local operator of order $2\alpha$ which can be defined by the singular integral (suitably regularized)
\begin{eqnarray*}
(-\Delta_x)^\alpha f(x)=C_{n,\alpha}\int_{\mathbb{R}^n}\frac{f(x)-f(\xi)}{|x-\xi|^{n+2\alpha}}d\xi
\end{eqnarray*}
where $C_{n,\alpha}$ is some normalization constant. In \cite{CaffarelliSilvestre}, Caffarelli and Silvestre proved that this operator can be defined in an alternative way. More precisely, for a function $f:\mathbb{R}^n\rightarrow\mathbb{R}$, they consider the extension problem 
\begin{equation}\label{ExtensionRn}
\left\lbrace
\begin{array}{rll}
\Delta_x U+\frac{a}{y}\partial_yU+\partial_{yy}U & = 0,  & \text{for }(x,y)\in\mathbb{R}^{n+1}_+,\\
U(x,0) & = f(x), & \text{for }x\in\mathbb{R}^n
\end{array}
\right.
\end{equation}
where $U:\mathbb{R}^{n+1}_+\rightarrow\mathbb{R}$ and $a=1-2\alpha$ and they prove the remarkable result that 
\begin{eqnarray*}
(-\Delta_x)^\alpha f=\frac{d_\alpha}{2\alpha}\lim_{y\rightarrow 0}y^a\partial_yU
\end{eqnarray*}
where the constant $d_\alpha$ is defined in (\ref{FractionalLaplacianGZ}). This operator can then be thought as a generalized Dirichlet-to-Neumann operator since for $\alpha=1/2$ this is precisely the well-known Dirichlet-to-Neumann operator. 

Noticing that a function $u$ is a solution to the eigenvalue problem (\ref{EigenvalueLaplacian}) if and only if the function $U:=y^{s-n}u$ is a solution of the extension problem (\ref{ExtensionRn}), Chang and Gonzalez \cite{ChangGonzalez} make the fundamental observation that 
\begin{eqnarray*}
P^{\alpha}_{\mathfrak{g}_{\mathbb{R}}}f=(-\Delta_x)^\alpha f=\frac{d_\alpha}{2\alpha}\lim_{y\rightarrow 0}y^a\partial_yU.
\end{eqnarray*}
This is the key point which allows to generalize this construction in the context of Poincar\'e-Einstein manifolds (see Section \ref{IntegralInequality} for precise definitions). In fact, fractional conformal Laplacians on the conformal infinity $(\partial X,[\ovgf])$ of Poincar\'e-Einstein manifolds $(X,g_+)$ can be defined as generalized Dirichlet-to-Neumann operators on compactifications of $(X,g_+)$.  
 
On the other hand, in the context of Poincar\'e-Einstein manifolds, it is natural to search for relations between the conformal structure of the boundary at infinity and the Riemannian geometry of $(X,g_+)$. Recently with O. Hijazi and S. Montiel \cite{HijaziMontielRaulot7}, we proved that the non negativity of the Yamabe invariant $\mathcal{Y}(\partial X,[\ovgf])$ (see Section \ref{HalfConfLaplacian}) of the boundary at infinity is equivalent to the fact that the linear isoperimetric 
\begin{eqnarray*}
n{\rm Vol}(\Omega)\leq {\rm Area}(\partial\Omega)
\end{eqnarray*}
holds for all compact domains $\Omega$ contained in $X$ where ${\rm Vol}(\Omega)$ and ${\rm Area}(\partial\Omega)$ denote respectively the volume of $\Omega$ and the area of $\partial\Omega$ with respect to $g_+$. An other characterization was previously obtained by Guillarmou and Qing \cite{GuillarmouQing} and state that $\mathcal{Y}(\partial X,[\ovgf])>0$ if and only if the largest real scattering pole of $X$ is less than $\frac{n}{2}-1$. In the same work, they also show that the positivity of the Yamabe invariant implies the positivity of the first eigenvalue of the operator $P^\alpha_{\hatgf}$ for all $0<\alpha\leq 1$ and $\hatgf\in[\ovgf]$. 

In this paper, we prove a sharp inequality between the first eigenvalue of the half-conformal Laplacian $P^{1/2}_{\hatgf}$ and the first eigenvalue of the conformal Laplacian $P^{1}_{\hatgf}$ for any $\hatgf\in[\ovgf]$. This result particularly fits well with the last quoted result of Guillarmou and Qing since the positivity of the first eigenvalue of $P^{1}_{\hatgf}$ is equivalent to the positivity of the Yamabe invariant of the boundary at infinity. Moreover this estimate is sharp since equality for a metric in the conformal infinity occurs if and only if the Poincar\'e-Einstein manifold is the hyperbolic space. The proof of this result relies on a new integral inequality (see Theorem \ref{MainRes}) involving the mean curvature and the scalar curvature of the boundary of any scalar flat compactifications of the Poincar\'e-Einstein manifold. The fact that scalar flat metrics appears naturally in this proof comes from the mere observation that the generalized Dirichlet-to-Neumann operator associated to the half-conformal Laplacian via the previous construction has a nice geometric description. Indeed it precisely deals with the existence, on a compact Riemannian manifold with boundary which in our situation are the compactifications of $(X,g_+)$, of a conformally related metric with zero scalar curvature and constant mean curvature (see Section \ref{HalfConfLaplacian}). As we will see this integral inequality has several other direct applications. For example, it implies that the only Poincar\'e-Einstein manifold which admits a Ricci-flat compactification has to be the hyperbolic space. This generalizes a recent result of Chen, Wang and Zhang \cite{ChenWangZhang} where a similar result is obtained under the additional assumption that the mean curvature is constant. Moreover, in Section \ref{UpperBoundsRenorVol}, we prove a new upper bound for the renormalized volume of four dimensional Poincar\'e-Einstein manifolds and the proof also relies on our main integral inequality. Finally in the last section, we consider spin Poincar\'e-Einstein manifolds and study more precisely the eigenvalue of the boundary Dirac operator on the conformal infinity. In particular, when combined with a conformal lower bound of Hijazi, Montiel and Zhang \cite{HijaziMontielZhang2} and the Hijazi inequality \cite{Hijazi1}, we obtain a new proof of an uniqueness result of Hijazi and Montiel \cite{HijaziMontiel3} concerning Poincar\'e-Einstein manifolds whose conformal infinity carries a twistor spinor (see Theorem \ref{HM-Twistor}). Although our assumptions here are in fact stronger than \cite{HijaziMontiel3} it seems interesting to the author to give an alternative proof of this result. Finally, our previous estimates on the renormalized volume are used to bound from below the first non negative eigenvalue of the boundary Dirac operator of four dimensional spin Poincar\'e-Einstein manifolds (see Theorem \ref{ReVol-BDO1}).

%%%%%%%%%%%%%%%%%%%%%%%%%%%%%%%%%%%%%%%%%%%%%%%%%%%%%%%%%%%%%%%%%%%%%%%%%%%%%%%%%%%%%%%%%%%%%%%%%%%%%%%%%

\section{An integral formula}\label{IntegralInequality}

%%%%%%%%%%%%%%%%%%%%%%%%%%%%%%%%%%%%%%%%%%%%%%%%%%%%%%%%%%%%%%%%%%%%%%%%%%%%%%%%%%%%%%%%%%%%%%%%%%%%%%%%%

Let $X$ be the interior of a $(n+1)$-dimensional connected and compact manifold $\overline{X}$ with (possibly disconnected) boundary $\partial X$. Recall that $(X,g_+)$ is a conformally compact manifold of order $C^{m,\alpha}$ with $m\geq 3$ and $\alpha\in]0,1[$ if $g_+$ is a Riemannian metric on $X$ such that the conformally related metric $\ovg=\rho^2g_+$ extends to a $C^{m,\alpha}$ Riemannian metric on $\overline{X}$ for a defining function $\rho$ of the boundary $\partial X$. A defining function is a smooth function $\rho:\overline{X}\rightarrow\mathbb{R}$ which is positive on $X$,  zero on $\partial X$ and such that $d\rho_{|\partial X}\neq 0$. There exists actually plenty of defining functions so that a canonical choice of the metric on the boundary is not possible. However, it is clear that such compactifications of $g_+$ induce a well-defined {\it conformal structure} on the boundary $\partial X$ denoted by $[\ovgf]$ where $\ovgf:=\ovg_{|\partial X}$. The conformal manifold $(\partial X,[\ovgf])$ will be refer to the {\it conformal infinity} or to the {\it boundary at infinity} of the conformally compact manifold $(X,g_+)$. Moreover, it is a well-known fact (see \cite{GrahamLee,Lee}) for example) that a metric $\hatgf\in[\ovgf]$ on $\partial X$ determines a unique defining function $r$ in a neighborhood of $\partial X$ such that the metric $\widetilde{g}=r^2g_+$ satisfies $\widetilde{g}_{|\partial X}=\hatgf$ and $|\widetilde{\nabla} r|^2=1$. Here $\widetilde{\nabla}r$ denotes the gradient of the function $r$ with respect to the metric $\widetilde{g}$ whose norm is also taken with respect to $\widetilde{g}$.  Such a function is called the {\it geodesic defining function} associated to $\hatgf$ since the integral curves of the vector field $\widetilde{\nabla} r$ are geodesics orthogonal to the boundary $\partial X$ so that the metric $g_+$ takes the form
\begin{eqnarray}\label{NormalForm}
g_+=r^{-2}\big(dr^2+\widetilde{g}_r\big)
\end{eqnarray} 
in a neighborhood of $\partial X$. Here $\widetilde{g}_r$ is a one-parameter family of metrics on $\partial X$ with $\widetilde{g}_0=\hatgf$. 

If now we assume that $g_+$ is at least $C^2$ and satisfies the Einstein equation
\begin{eqnarray*}
Ric_{g_+}=-ng_+
\end{eqnarray*}
we say that $(X,g_+)$ is a Poincar\'e-Einstein manifold. Note that in this situation if the boundary at infinity admits a smooth representative then it follows from \cite{ChruscielDelayLeeSkinner} that for any smooth defining function, the metric $\overline{g}=\rho^2g_+$ is at least of class $C^{3,\alpha}$ on $\overline{X}$. We will always assume that these assumptions are fulfilled unless otherwise stated. 

In the following, we will denote by $\ovg$, $\hatg$ or $\widetilde{g}$ metrics on the compact manifold with boundary $\overline{X}=X\cup\partial X$ and by $\ovgf$, $\hatgf$ and $\widetilde{\mathfrak{g}}$ their corresponding restrictions to the boundary $\partial X$. 

Now we prove our main inequality (see Theorem \ref{MainRes}) which will be of constant use in this paper. For this, we generalize the arguments used in \cite{ChenLaiWang,GurskyHan,Raulot10} largely inspired by Obata \cite{Obata} in his work on the uniqueness of Yamabe metrics for Einstein closed manifolds. First, we remark that if $\hatg=\rho^2 g_+$ for $\rho$ a smooth defining function, the formula which gives the Ricci curvatures of two conformally related metrics (see \cite{Besse} for example) leads to
\begin{eqnarray}\label{RicConf}
Ric_{\hatg}=-(n-1)\rho^{-1}\widehat{\nabla}^2\rho+\Big(n\rho^{-2}\big(|\widehat{\nabla}\rho|^2-1\big)-\rho^{-1}\widehat{\Delta}\rho\Big)\widehat{g}.
\end{eqnarray}
Here $\widehat{\nabla}$, $\widehat{\nabla}^2$ and $\widehat{\Delta}$ denote respectively the gradient, the hessian and the Laplace operator on $(\overline{X},\hatg)$. Taking the trace of the previous identity with respect to the metric $\hatg$ implies that the scalar curvature $R_{\hatg}$ of $(\overline{X},\hatg)$ is given by
\begin{eqnarray}\label{ScalarConf}
R_{\hatg}=-2n\rho^{-1}\widehat{\Delta}\rho+n(n+1)(|\widehat{\nabla}\rho|^2-1).
\end{eqnarray}
From (\ref{RicConf}) and (\ref{ScalarConf}), we can compute that if the defining function $\rho$ is a geodesic one, the boundary $\partial X$ of $\overline{X}$ is totally geodesic with respect to $\hatg$. This is due to the fact that the first order term in the asymptotic expansion of $\widetilde{g}_r$ in (\ref{NormalForm}), which is precisely twice the second fundamental form of $\partial X$ in $(\overline{X},\widetilde{g})$, is zero by \cite{FeffermanGraham1}. In particular, for any metric $\hatg\in[\ovg]$, $\partial X$ is a totally umbilical hypersurface in $\overline{X}$ with respect to $\hatg$ since this property only depends on the conformal class of $\ovg$. As we will see, this is of great importance in the following. Now denote by
\begin{eqnarray*}
E_{\hatg}:=Ric_{\hatg}-\frac{R_{\hatg}}{n+1}\hatg
\end{eqnarray*}
the trace free part of the Ricci tensor which from (\ref{RicConf}) and (\ref{ScalarConf}) can be written as
\begin{eqnarray}\label{TF-Ricci1}
E_{\hatg}=-(n-1)\Big(\widehat{\nabla}^2\rho-\frac{\widehat{\Delta}\rho}{n+1}\hatg\Big).
\end{eqnarray}
Then for $\varepsilon>0$ sufficiently small and $x\in\partial X\times[0,\varepsilon[\subset\overline{X}$, we let $y(x)={\rm dist}_{\hatg}(x,\partial X)$ in such a way that the metric takes the form 
\begin{eqnarray}\label{GeodCoord}
\hatg=dy^2+\hatg_y
\end{eqnarray}
where $\hatg_y$ is a one-parameter family of metrics on $\partial X$. Moreover, in this coordinate system, the defining function $\rho$ has the following expansion 
\begin{eqnarray}\label{ExpansionDF}
\rho=y+O(y^2).
\end{eqnarray}
Now we consider
\begin{eqnarray*}
X_\varepsilon:=\Big\{x\in X\,/\,y(x)\geq\varepsilon\Big\},
\end{eqnarray*}
and we compute using (\ref{TF-Ricci1}) that
\begin{eqnarray*}
\int_{\Xep}\rho|E_{\hatg}|^2dv_{\hatg} & = & -(n-1)\int_{\Xep}\<\Big(\widehat{\nabla}^2\rho-\frac{\widehat{\Delta}\rho}{n+1}\hatg\Big),E_{\hatg}\>dv_{\hatg}\\
& = & -(n-1)\int_{\Xep}\<\widehat{\nabla}^2\rho,E_{\hatg}\>dv_{\hatg}
\end{eqnarray*}
since $E_{\hatg}$ is trace-free. An integration by parts gives
\begin{eqnarray*}
\int_{\Xep}\rho|E_{\hatg}|^2\,dv_{\hatg}=-(n-1)\int_{\Xep}\<\widehat{\nabla}\rho,\widehat{\delta} E_{\hatg}\>\,dv_{\hatg}-(n-1)\int_{\partial\Xep}E_{\hatg}\big(\widehat{\nabla}\rho,\widehat{N}_\varepsilon\big)\,ds_{\hatgf}
\end{eqnarray*}
where $\widehat{\delta}$ denotes the divergence of a tensor field, $dv_{\hatg}$ (resp. $ds_{\hatgf}$) is the Riemannian volume element of $(\overline{X},\hatg)$ (resp. of $(\partial X,\hatgf)$) and $\widehat{N}_\varepsilon$ is the outward unit normal to $\partial\Xep$. From the second Bianchi identity, we deduce that  
\begin{eqnarray*}
\widehat{\delta} E_{\hatg}=-\frac{n-1}{2(n+1)}\widehat{\nabla} R_{\hatg}
\end{eqnarray*}
and this implies by letting $\varepsilon\rightarrow 0$ the following general formula
\begin{eqnarray*}
\int_{\overline{X}}\rho|E_{\hatg}|^2\,dv_{\hatg}=\frac{(n-1)^2}{2(n+1)}\int_{\overline{X}}\<\widehat{\nabla}\rho,\widehat{\nabla} R_{\hatg}\>\,dv_{\hatg}+(n-1)\int_{\partial X}E_{\hatg}\big(\widehat{N},\widehat{N}\big)\,ds_{\hatgf}.
\end{eqnarray*}
Here we used the facts that $\widehat{\nabla}\rho=\frac{\partial}{\partial y}+O(y)$ and that $\widehat{N}:=-\frac{\partial}{\partial y}$ is the outward unit normal along $\partial X$ in $(\overline{X},\hatg)$ because of (\ref{GeodCoord}). If now we assume that $R_{\hatg}$ is constant, the previous equality reads as 
\begin{eqnarray}\label{GeneralId}
\int_{\overline{X}}\rho|E_{\hatg}|^2\,dv_{\hatg}=(n-1)\int_{\partial X}E_{\hatg}\big(\widehat{N},\widehat{N}\big)\,ds_{\hatgf}. 
\end{eqnarray}
On the other hand, if $\Ss_{\hatgf}$ denotes the scalar curvature of $(\partial X,\hatgf)$, the Gauss formula gives
\begin{eqnarray}\label{GaussFormula}
\Ss_{\hatgf}=R_{\hatg}-2 Ric_{\hatg}(\widehat{N},\widehat{N})+n(n-1)H^2_{\hatg}
\end{eqnarray}
since $\partial X$ is totally umbilical so that the integrand in the right-hand side of (\ref{GeneralId}) can be expressed as
\begin{eqnarray}\label{TraceFreeRicci}
E_{\hatg}(\widehat{N},\widehat{N})=\frac{1}{2}\Big(\frac{n-1}{n+1}R_{\hatg}-\Ss_{\hatgf}+n(n-1)H^2_{\hatg}\Big).
\end{eqnarray}
Here $H_{\hatg}=\frac{1}{n}{\rm Tr}_{\hatg}(L_{\hatg})$ is the (inner) mean curvature of $\partial X$ in $(\overline{X},\hatg)$ defined by taking the normalized trace of the second fundamental form $L_{\hatg}$. Combining (\ref{GeneralId}) and (\ref{TraceFreeRicci}) implies that 
\begin{eqnarray}\label{MainCSC}
\frac{n-1}{n+1}R_{\hatg}\,{\rm Vol}(\partial X,\hatgf)+\int_{\partial X}\Big(n(n-1)H^2_{\hatg}-\Ss_{\hatgf}\Big)\,ds_{\hatgf}\geq 0
\end{eqnarray}
for all compactified metrics $\hatg$ with constant scalar curvature. Moreover equality occurs if and only if the metric $\hatg$ is Einstein. From this observation, we deduce the main result of this section:
\begin{theorem}\label{MainRes}
Let $(X,g_+)$ be a Poincar\'e-Einstein manifold which has a smooth representative in its conformal infinity $(\partial X,[\ovgf])$. Then for any  defining function $\rho$ of $\partial X$ such that the scalar curvature $R_{\hatg}$ of the compactified manifold $(\overline{X},\hatg)$ is zero where $\hatg=\rho^2g_+$, we have
\begin{eqnarray}\label{MainIneBis}
n(n-1)\int_{\partial X} H^2_{\hatg}\,ds_{\hatgf}\geq\int_{\partial X}\Ss_{\hatgf}\,ds_{\hatgf}.
\end{eqnarray}
Moreover, there exists a compactification of $(X,g_+)$ for which (\ref{MainIneBis}) is an equality if and only if $(X,g_+)$ is isometric to the hyperbolic space $(\mathbb{H}^{n+1},g_{\mathbb{H}})$. 
\end{theorem}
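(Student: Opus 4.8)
The plan is in two parts: the inequality (\ref{MainIneBis}) falls out of the computation already performed, and the equality statement is a rigidity result.

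\emph{The inequality, and the first half of the rigidity.} A scalar flat metric has (constant) zero scalar curvature, so inequality (\ref{MainCSC}) applies to $\hatg=\rho^2g_+$ with $R_{\hatg}=0$: the term proportional to $R_{\hatg}$ drops and what remains is precisely (\ref{MainIneBis}). Suppose now that some scalar flat compactification $\hatg=\rho^2g_+$ realizes equality in (\ref{MainIneBis}), hence in (\ref{MainCSC}) with $R_{\hatg}=0$. Combining (\ref{GeneralId}) and (\ref{TraceFreeRicci}) gives, for this metric,
\begin{eqnarray*}
\int_{\overline X}\rho\,|E_{\hatg}|^2\,dv_{\hatg}=\frac{n-1}{2}\int_{\partial X}\big(n(n-1)H^2_{\hatg}-\Ss_{\hatgf}\big)\,ds_{\hatgf},
\end{eqnarray*}
so equality forces the left-hand side to vanish; since $\rho>0$ on $X$ this yields $E_{\hatg}\equiv 0$, i.e. $\hatg$ is Einstein, and together with $R_{\hatg}=0$ the compactification $(\overline X,\hatg)$ is Ricci flat.

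\emph{A Ricci flat compactification forces constant Hessian factor.} By (\ref{TF-Ricci1}), $E_{\hatg}=0$ is equivalent to $\widehat\nabla^2\rho=\psi\,\hatg$ with $(n+1)\psi=\widehat\Delta\rho$. Taking the divergence of this identity and using the commutation formula $\widehat\delta\,\widehat\nabla^2\rho=d(\widehat\Delta\rho)+Ric_{\hatg}(\widehat\nabla\rho,\cdot)$ together with $Ric_{\hatg}=0$, one gets $n\,d\psi=0$, so $\psi\equiv c$ is constant. Since $\rho$ vanishes on $\partial X$, is positive on $X$, and $\overline X$ is compact, $\rho$ attains an interior maximum at a point $p$; evaluating $\widehat\nabla^2\rho=c\,\hatg$ there gives $c\leq 0$, and $c\neq 0$ because otherwise $\widehat\nabla\rho$ would be parallel, hence identically $0$ (it vanishes at $p$), contradicting that $\rho$ is a defining function. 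Write $c=-k$, $k>0$.

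\emph{Identifying the hyperbolic space, and the converse.} For every $\hatg$-geodesic $\gamma$ from $p$ we have $\tfrac{d^2}{dt^2}\rho(\gamma(t))=\widehat\nabla^2\rho(\dot\gamma,\dot\gamma)=-k$ with $\tfrac{d}{dt}\rho(\gamma(t))\big|_{t=0}=0$, so $\rho=\rho(p)-\tfrac k2 t^2$ depends only on $t=\mathrm{dist}_{\hatg}(p,\cdot)$. Writing $\hatg=dt^2+h_t$ in geodesic polar coordinates at $p$, the tangential part of $\widehat\nabla^2\rho=-k\hatg$ reduces to $\partial_t h_t=\tfrac2t h_t$, which with the normalization $t^{-2}h_t\to g_{\mathbb S^n}$ as $t\to0$ gives $h_t=t^2 g_{\mathbb S^n}$; hence $\hatg$ is flat, $(\overline X,\hatg)$ is a closed Euclidean ball of radius $a$ centered at $p$, $\partial X=\mathbb S^n$, and $\rho=\tfrac k2(a^2-|x|^2)$ in Euclidean coordinates. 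From (\ref{ScalarConf}) with $R_{\hatg}=0$ and the boundedness of $\widehat\Delta\rho$ on $\overline X$ one reads off $|\widehat\nabla\rho|^2=1$ on $\partial X$, forcing $a=1/k$; substituting into $g_+=\rho^{-2}\hatg$ and rescaling coordinates gives $g_+=4(1-|x|^2)^{-2}|dx|^2$ on the unit ball, so $(X,g_+)$ is isometric to $(\mathbb H^{n+1},g_{\mathbb H})$. Conversely, compactifying $(\mathbb H^{n+1},g_{\mathbb H})$ in its Poincar\'e ball model by $\rho=\tfrac12(1-|x|^2)$ gives $\hatg=\rho^2g_{\mathbb H}$ flat, in particular scalar flat with $E_{\hatg}\equiv 0$, hence equality in (\ref{MainCSC}) and so in (\ref{MainIneBis}).

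\emph{Main obstacle.} The substance is the rigidity direction. The key point — and where this improves on \cite{ChenWangZhang} — is that Ricci flatness of $\hatg$, rather than the stronger hypothesis that $\partial X$ has constant mean curvature, already forces the concircular function $\rho$ to have constant Hessian factor; once that is established, recognizing the Euclidean ball and the hyperbolic ball model is a short ODE computation.
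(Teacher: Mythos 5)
Your proof is correct, and it follows the paper's strategy for the inequality and the first half of the rigidity: (\ref{MainCSC}) with $R_{\hatg}=0$ gives (\ref{MainIneBis}), and equality forces $\int_{\overline X}\rho|E_{\hatg}|^2\,dv_{\hatg}=0$, hence $E_{\hatg}\equiv 0$ and Ricci-flatness of the compactification. You diverge in the endgame. The paper deduces from the Ricci identity that $\widehat\Delta\rho$ is constant, then uses the boundary splitting of the Laplacian to show $H_{\hatg}$ is a nonzero constant, and finally invokes Lemma 1 of Xia to identify $(\overline X,\hatg)$ with a Euclidean ball. You instead make this step self-contained: the contracted Bochner identity plus $Ric_{\hatg}=0$ gives $\widehat\nabla^2\rho=-k\hatg$ with $k>0$ constant (the sign coming from the interior maximum of $\rho$), and the classical Obata--Reilly argument in geodesic polar coordinates at the maximum point exhibits $\hatg$ as the flat metric on a ball of radius $1/k$; you then compute $g_+$ explicitly and recognize $g_{\mathbb H}$, which the paper leaves implicit. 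What your route buys is independence from the external citation and an explicit identification of the hyperbolic metric; what it costs is that you must still justify that $\exp_p$ is a global diffeomorphism from the closed ball of radius $1/k$ onto $\overline X$ (no conjugate or cut points before reaching $\partial X$). This is standard --- e.g. ${\rm dist}_{\hatg}(p,\cdot)=\sqrt{2(\rho(p)-\rho)/k}$ is smooth away from $p$, so there is no interior cut locus --- but it is precisely the content of the cited lemma, so a sentence is warranted. One minor caveat: the paper's formula (\ref{ScalarConf}) as printed is missing a factor $\rho^{-2}$ in front of $(|\widehat{\nabla}\rho|^2-1)$ (compare with the trace of (\ref{RicConf})); your deduction that $|\widehat\nabla\rho|^2=1$ on $\partial X$ uses the corrected version, and alternatively follows directly from the expansion (\ref{ExpansionDF}), which is how the paper obtains $\widehat N(\rho)=-1$.
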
 

\begin{proof}
The integral inequality (\ref{MainIneBis}) follows directly using (\ref{MainCSC}) for any scalar flat compactification $(\overline{X},\hatg)$ of the Poincar\'e-Einstein space $(X,g_+)$. If equality occurs, it follows from (\ref{GeneralId}) and (\ref{TraceFreeRicci}) that $E_{\hatg}$ vanishes on $\overline{X}$ and since $\hatg$ is scalar flat it has to be Ricci-flat. In particular, we have 
\begin{eqnarray}\label{EqMS}
n(n-1)H_{\hatg}^2=\Ss_{\hatgf}.
\end{eqnarray}
From (\ref{TF-Ricci1}), we also deduce that $\rho$ satisfies the boundary value problem 
\begin{equation}\label{EqualityDF}
\left\lbrace
\begin{array}{ll}
\widehat{\nabla}^2\rho=\frac{\widehat{\Delta}\rho}{n+1}\hatg & \text{ on  } \overline{X}\\
\rho_{|\partial X}=0,\,\,\widehat{N}(\rho)_{|\partial X}=-1 & \text{ on  }\partial X
\end{array}
\right.
\end{equation}
where the boundary conditions follow from (\ref{ExpansionDF}) and the fact that on $\partial X$, it holds that $\widehat{N}=-\frac{\partial}{\partial y}$. Now from the Ricci identity and the first equation in (\ref{EqualityDF}) we compute that
\begin{eqnarray*}
\frac{n}{n+1}\widehat{\nabla}(\widehat{\Delta}\rho)=0
\end{eqnarray*}
and then $\widehat{\Delta}\rho$ is constant on $\overline{X}$. We claim that 
$H_{\hatg}$ is a non zero constant. For this, it is enough to recall that for all $f\in C^2(\overline{X})$ it holds on $\partial X$ that
\begin{eqnarray*}
\widehat{\Delta} f =\widehat{\LSs}f+\widehat{\nabla}^2f(\widehat{N},\widehat{N}) +nH_{\hatg}\widehat{N}(f)
\end{eqnarray*}
where $\widehat{\LSs}$ denotes the Laplacian on $(\partial X,\hatgf)$. With the help of (\ref{EqualityDF}), the previous formula for $f=\rho$ yields
\begin{eqnarray*}
\widehat{\Delta}\rho =\frac{\widehat{\Delta}\rho}{n+1}-nH_{\hatg}
\end{eqnarray*}
that is 
\begin{eqnarray}\label{DeltaH}
\widehat{\Delta}\rho =-(n+1)H_{\hatg} 
\end{eqnarray}
at all points of $\partial X$. Since $\widehat{\Delta}\rho$ is a constant, the mean curvature has to be constant. If this constant is zero, the defining function is a harmonic function which vanishes on the boundary and so it has to vanish on the whole of $\overline{X}$. This gives a contradiction and then $H_{\hatg}$ is a non zero constant as well as $\Ss_{\hatgf}$ because of (\ref{EqMS}). Now using the Stokes formula in (\ref{DeltaH}) and since $\widehat{N}(\rho)=-1$ on $\partial X$, we deduce that
\begin{eqnarray*}
H_{\hatg}=\frac{1}{n+1}\frac{{\rm Vol}(\partial X,\hatgf)}{{\rm Vol}(\overline{X},\hatg)}.
\end{eqnarray*}
Since $(\overline{X},\hatg)$ is also Ricci-flat, it follows from \cite[Lemma 1]{Xia} that this compactification is isometric to a Euclidean ball. The Poincar\'e-Einstein manifold $(X,g_+)$ has to be the hyperbolic space. Conversely it is obvious to check that the unit Euclidean ball $(\mathbb{B}^{n+1},g_{\mathbb{B}})$ is a compactification of the hyperbolic space $(\mathbb{H}^{n+1},g_{\mathbb{H}})$ for which inequality (\ref{MainIneBis}) is an equality.
\end{proof}

\begin{remark}
It is easy to see from (\ref{MainCSC}) that Theorem \ref{MainRes} also holds if we only assume that the compactifications of the Poincar\'e-Einstein space have non positive constant scalar curvature.
\end{remark}

\begin{remark}
We can also note that the only compactifications of the hyperbolic space for which (\ref{MainIneBis}) is an equality are the metrics homothetic to the Euclidean metric on the unit ball. Indeed, it follows from the proof of the equality case of Theorem \ref{MainRes} that such a compactified metric has zero scalar curvature and positive constant mean curvature. From \cite[Theorem 2.1]{Escobar0}, we conclude that this metric is a constant multiple of the flat metric on the unit ball.  
\end{remark}

%\begin{remark}
%The inequality (\ref{MainIneBis}) also furnishes a criterion which prevents for a given conitnuous function $H_0$ on the $n$-dimensional sphere to be the mean curvature of a metric on the ball conformally related to the Euclidean one with zero scalar curvature and mean curvature equals to $H_0$. For $n=2$, it has a nice formulation since by the Gauss-Bonnet formula it says that a continuous function $H_0$ on $\mathbb{S}^2$ such that
%\begin{eqnarray*}
%\end{eqnarray*}
%\end{remark}

%\begin{remark}
%By choosing $\rho$ to be solutions of Yamabe-type problems on manifolds with boundary in (\ref{MainCSC}), we recover the inequalities of Theorem $2.4$ in \cite{GurskyHan} and of Theorem $7$ in \cite{ChenLaiWang}, the equality cases being more involved. Moreover if we apply Theorem \ref{MainRes} to a solution of the higher dimensional version of the Riemann mapping theorem we get Theorem $6$ in \cite{ChenLaiWang} and Theorem $2$ in \cite{Raulot10}. We will discuss this observation with more attention in the next section.
%\end{remark}

As a direct consequence, we obtain the following rigidity result for the hyperbolic space which generalize a recent result of \cite[Theorem 3.2]{ChenWangZhang}. It is interesting to note that no restriction is made on the conformal infinity.  
\begin{corollary}\label{RicciFlat-PE}
The only Poincar\'e-Einstein manifold which has a smooth representative in its conformal infinity and which admits a Ricci-flat conformal compactification is the hyperbolic space. 
\end{corollary}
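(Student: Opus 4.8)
The plan is to deduce this directly from Theorem \ref{MainRes}, using the fact that a Ricci-flat metric is in particular scalar flat. So suppose $(X,g_+)$ is a Poincar\'e-Einstein manifold with smooth conformal infinity admitting a Ricci-flat conformal compactification $(\overline{X},\hatg)$ with $\hatg=\rho^2 g_+$ for some defining function $\rho$. Since $\hatg$ is Ricci-flat we have $R_{\hatg}=0$, so Theorem \ref{MainRes} applies and yields the integral inequality (\ref{MainIneBis}). The key observation is that Ricci-flatness forces equality in (\ref{MainIneBis}): indeed, the inequality (\ref{MainCSC}) from which (\ref{MainIneBis}) was derived has $E_{\hatg}(\widehat N,\widehat N)$ (integrated over $\partial X$) on one side, and since $Ric_{\hatg}=0$ we have $E_{\hatg}\equiv 0$ on $\overline{X}$, in particular $E_{\hatg}(\widehat N,\widehat N)=0$ along $\partial X$. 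Alternatively, one can simply invoke the equality discussion in Theorem \ref{MainRes}: the argument there showed that equality in (\ref{MainIneBis}) is equivalent to $E_{\hatg}$ vanishing on $\overline{X}$, which here holds by hypothesis.

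Once equality in (\ref{MainIneBis}) is established, the equality case of Theorem \ref{MainRes} immediately gives that $(X,g_+)$ is isometric to the hyperbolic space $(\mathbb{H}^{n+1},g_{\mathbb{H}})$. Conversely, the hyperbolic space does admit the Euclidean unit ball as a (Ricci-flat, indeed flat) compactification, so the characterization is an equivalence. This completes the proof.

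There is essentially no obstacle: the work has all been done in Theorem \ref{MainRes}, and the only thing to check is the trivial implication "Ricci-flat $\Rightarrow$ scalar flat $\Rightarrow$ hypotheses of Theorem \ref{MainRes} met" together with "Ricci-flat $\Rightarrow$ $E_{\hatg}=0$ $\Rightarrow$ equality case." The one point worth stating explicitly is \emph{why} no assumption on the conformal infinity beyond the existence of a smooth representative is needed: this is because Theorem \ref{MainRes} itself already makes no such restriction. It may also be worth remarking (as the paper does for Theorem \ref{MainRes}) that the compactification is then forced to be homothetic to the flat unit ball, and comparing with \cite[Theorem 3.2]{ChenWangZhang} to point out that the constancy of the mean curvature, assumed there, is here a \emph{conclusion} rather than a hypothesis.
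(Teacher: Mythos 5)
Your proof is correct and takes essentially the same route as the paper: the paper likewise observes that Ricci-flatness implies scalar flatness so that Theorem \ref{MainRes} applies, deduces equality in (\ref{MainIneBis}) from the Gauss formula (\ref{GaussFormula}) (which is exactly your observation that $E_{\hatg}\equiv 0$ forces the integrand in (\ref{TraceFreeRicci}) to vanish), and then invokes the rigidity part of Theorem \ref{MainRes}.
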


\begin{proof}
Note that if $\hatg$ is a Ricci-flat compactification it is scalar flat so that Inequality (\ref{MainIneBis}) holds. On the other hand, the Gauss formula (\ref{GaussFormula}) implies that this inequality is in fact an equality and then the rigidity part of Theorem \ref{MainRes} allows to conclude. 
\end{proof}

It is important to remark that if we allow the Poincar\'e-Einstein manifold $(X,g_+)$ to have singularities such as cusps one can easily produce examples with Ricci-flat compactifications (also with singularities) which are not isometric to the hyperbolic space. Indeed, it is well known that the {\it hyperbolic cusps} defined by the warped products  
\begin{eqnarray*}
(X={\mathbb R}\times P,g_+=ds^2+e^{2s}\ovgf)
\end{eqnarray*}
where $(P,\ovgf)$ is a $n$-dimensional compact non-flat Ricci-flat Riemannian manifold are Einstein manifolds with scalar curvature equals to $-n(n+1)$. Then defining a new variable $t\in{\mathbb R}^+$ as $t=e^s$, we obtain
\begin{eqnarray*}
{\overline g}=e^{-2s}g_+=dt^2+\ovgf
\end{eqnarray*}
so that the Ricci-flat metric $dt^2+\ovgf$ extends to $[0,+\infty[\times P$. We conclude that a hyperbolic cusp is a Poincar\'e-Einstein manifold with $(P,[\ovgf])$ as conformal infinity and a cusp singularity at $s=+\infty$ which has a Ricci-flat compactification with one cylindrical end and which is not isometric to the hyperbolic space.

%Remark that this result holds under the more general assumption that the Poincar\'e-Einstein manifold admits a scalar flat conformal compactification $\ovg$ with 
%\marginpar{{\textbf{Pertinence?????}}}
%\begin{eqnarray*}
%\int_{\partial X}E_{\ovg}\big(\overline{N},\overline{N}\big)\,ds_{\ovgf}\leq 0.
%\end{eqnarray*}

%%%%%%%%%%%%%%%%%%%%%%%%%%%%%%%%%%%%%%%%%%%%%%%%%%%%%%%%%%%%%%%%%%%%%%%%%%%%%%%%%%%%%%%%%%%%%%%%%%%%%%%%%

\section{Sharp lower bound for the first eigenvalue of the conformal half-Laplacian}\label{HalfConfLaplacian}

%%%%%%%%%%%%%%%%%%%%%%%%%%%%%%%%%%%%%%%%%%%%%%%%%%%%%%%%%%%%%%%%%%%%%%%%%%%%%%%%%%%%%%%%%%%%%%%%%%%%%%%%%

In this section, we apply Theorem \ref{MainRes} to prove a sharp inequality relating the squared of the first eigenvalue of the conformal half-Laplacian and the first eigenvalue of the conformal Laplacian of the boundary at infinity of a Poincar\'e-Einstein manifold.  

Let first recall the relation between the conformal half-Laplacian and the generalized Dirichlet-to-Neumann operator which was studied by Escobar \cite{Escobar2}. This correspondence was established by Guillarmou and Guillop\'e \cite{GuillarmouGuillope} in this situation and largely generalized in Chang and Gonzalez\cite{ChangGonzalez}. 

From the work of Graham and Zworski \cite{GrahamZworski}, it is well-known that for $f$ a smooth function on $\partial X$ and $s\in\mathbb{C}$, there exists a solution to the problem
\begin{eqnarray}\label{EigenScat}
-\Delta_{g_+} u-s(n-s)u= 0 \text{ in }X
\end{eqnarray}
such that
\begin{eqnarray}\label{ExpansionSol}
u=Fr^{n-s}+Gr^s,\quad F,G\in C^{\infty}(X),\quad F_{|r=0}=f,
\end{eqnarray}
for all $s\in\mathbb{C}$ with 
\begin{eqnarray*}
{\rm Re\,}(s)\geq\frac{n}{2},\quad s(n-s)\notin\sigma_p(-\Delta_{g_+}),\quad s\not\in\frac{n}{2}+\mathbb{N}.
\end{eqnarray*} 
Here $\sigma_p(-\Delta_{g_+})$ denotes the pure point spectrum of the Laplace operator $-\Delta_{g_+}$ on $(X,g_+)$ and $r$ the geodesic defining function corresponding to a boundary metric $\hatgf\in[\ovgf]$. The scattering operator acts on $C^\infty(\partial X)$ by
\begin{eqnarray*}
S(s)f=G_{|r=0}
\end{eqnarray*}
and defines a meromorphic family of pseudo-differential operators for ${\rm Re\,}(s)>n/2$ with simple poles of finite rank at $s=n/2+k$, $k\in\mathbb{N}$. The operators 
\begin{eqnarray}\label{GammaPower}
P^{\alpha}_{\hatgf}:=d_\alpha S\Big(\frac{n}{2}+\alpha\Big),
\end{eqnarray}
where the constant $d_\alpha$ is given in (\ref{FractionalLaplacianGZ}), are conformally covariant self-adjoint operators of order $2\alpha$ with principal symbols $(-\Delta_{\hatgf})^\alpha$. They are usually referred to as fractional conformal Laplacians. When $\alpha\in\mathbb{N}$ is an integer, it can be shown that 
\begin{eqnarray*}
{\rm Res\,}_{s=n/2+\alpha} S(s)=c_\alpha P^\alpha_{\hatgf},\quad c_\alpha=(-1)^\alpha\big(2^{2\alpha}\alpha!(\alpha-1)!\big)^{-1}
\end{eqnarray*}
are the GJMS operators constructed in \cite{GrahamJenneMasonSparling}. 

For $\alpha=1$ we recover the classical conformal Laplacian given by
\begin{eqnarray*}
P_{\hatgf}:=P_{\hatgf}^1=-\widehat{\LSs} +\frac{n-2}{4(n-1)}\Ss_{\hatgf}. 
\end{eqnarray*}
This operator appears in the famous Yamabe problem which consists to prove that, on a closed Riemannian manifold the metric can be conformally deformed to a metric with constant scalar curvature. This problem has a long and fruitful history and we refer the interested reader to \cite{LeeParker}. Its resolution relies on a deep study of the Yamabe invariant, which in our situation, is defined by
\begin{eqnarray}\label{YamabeBoundary}
\mathcal{Y}(\partial X,[\ovgf]):=\inf_{\hatgf\in[\ovgf]}\frac{\int_{\partial X} \Ss_{\hatgf}ds_{\hatgf}}{{\rm Vol}(\partial X,\hatgf)^{\frac{n-2}{n}}}.
\end{eqnarray}
This number is a conformal invariant of $(\partial X,\ovgf)$ and so is its sign. As recalled in the introduction, we remark that the non-negativity of the Yamabe invariant has deep consequences on the topology and the Riemannian structures of the associated Poincar\'e-Einstein manifold. It is also important to notice that this sign is precisely given by the one of any of the first eigenvalue of $P_{\hatgf}$ for $\hatgf\in [\ovgf]$.  Moreover, it satisfies the following inequality due to Aubin \cite{Aubin}
\begin{eqnarray}\label{YamabeSphere}
\mathcal{Y}(\partial X,[\ovgf])\leq\mathcal{Y}(\mathbb{S}^n,[\mathfrak{g}_{\mathbb{S}}])=n(n-1)\omega_n^{\frac{2}{n}}.
\end{eqnarray}
and if $\mathcal{Y}(\partial X,[\ovgf])\geq 0$ it holds that
\begin{eqnarray}\label{YamabeEigenvalues}
\mathcal{Y}(\partial X,[\ovgf])=\frac{4(n-1)}{n-2}\inf_{\hatgf\in[\ovgf]}\Big(\lambda_1(P_{\hatgf}){\rm Vol}(\partial X,\hatgf)^\frac{2}{n}\Big).
\end{eqnarray}

For $\alpha=1/2$, we obtain the so-called conformal half-Laplacian which for obvious reasons will be denoted by $\sqrt{P_{\hatgf}}$. As noticed by Guillarmou and Guillop\'e \cite{GuillarmouGuillope}, this operator is tightly related to a generalized Dirichlet-to-Neumann operator defined using the conformal Laplacian of any compactification $(\overline{X},\hatg)$ of the Poincar\'e-Einstein manifold $(X,g_+)$. Indeed first note that the existence of a solution $u$ of the eigenvalue problem (\ref{EigenScat}) for $s=(n+1)/2$ is equivalent to the fact that $U=\rho^{-\frac{n-1}{2}} u$ satisfies
\begin{eqnarray*}
-\widehat{\Delta} U+\frac{n-1}{4n}R_{\hatg}U=0,
\end{eqnarray*}
for $\hatg=\rho^2 g_+$. On the other hand, if $r$ denotes the geodesic defining function associated to $\hatgf$, we can compute that
\begin{eqnarray*}
\rho=r-H_{\hatg}r^2+O(r^3).
\end{eqnarray*}
Combining with (\ref{ExpansionSol}) yields to
\begin{eqnarray*}
U=f+\Big(S\big(\frac{n+1}{2}\big)f+\frac{n-1}{2}H_{\hatg}f\Big)r+O(r^2)
\end{eqnarray*} 
and then since $d_{1/2}=-1$ we conclude from (\ref{GammaPower}) that
\begin{eqnarray*}
\sqrt{P_{\hatg}}(f)=\big(\widehat{N}(U)+\frac{n-1}{2}H_{\hatg}U\big)_{|\partial X}
\end{eqnarray*}
where $\widehat{N}$ is the unit outward normal to $\partial X$ for $\hatg$. In particular $u$ is an eigenfunction associated to the first eigenvalue $\lambda_{1}(\sqrt{P_{\hatgf}})$ of $\sqrt{P_{\hatgf}}$ if and only if the associated $U$ satisfies 
\begin{equation}\label{StekYam}
\left\lbrace
\begin{array}{ll}
-\widehat{\Delta} U+\frac{n-1}{4n}R_{\hatg}U=0 & \quad\text{ on } \overline{X}\\ 
\widehat{N}(U)+\frac{n-1}{2}H_{\hatg}U =\lambda_{1}(\sqrt{P_{\hatgf}}) U & \quad\text{ on }\partial X.
\end{array}
\right.
\end{equation}
It is important to note here that as noticed in \cite{Escobar3} this eigenvalue can be $-\infty$. However if we assume that the boundary has non negative Yamabe invariant then $\lambda_{1}(\sqrt{P_{\hatgf}})$ is also non negative by \cite[Corollary 5]{HijaziMontiel3}. In the following, we will always assume that this assumption is fulfilled although we could only assume the finiteness of this eigenvalue for our results to hold. Anyway thanks to this characterization, we are now able to prove:
\begin{theorem}\label{YamabeSteklov}
Let $(X,g_+)$ be a Poincar\'e-Einstein manifold which has a smooth representative in its conformal infinity $(\partial X,[\ovgf])$. Then if the Yamabe invariant $\mathcal{Y}(\partial X,[\ovgf])$ is non negative, the first eigenvalue of the conformal half-Laplacian satisfies
\begin{eqnarray}\label{EigenEstimate1}
\lambda_{1}(\sqrt{P_{\hatgf}})^2\geq\frac{(n-1)^2}{n(n-2)}\lambda_{1}(P_{\hatgf})
\end{eqnarray}
for $n\geq 3$ and
\begin{eqnarray}\label{EigenEstimate2}
\lambda_{1}(\sqrt{P_{\hatgf}})^2\geq\frac{\pi}{2\,{\rm Area\,}(\partial X,\hatgf)}\chi(\partial X)
\end{eqnarray}
for $n=2$ and for any $\hatgf\in[\ovgf]$. Moreover, there exists a compactification of $(X,g_+)$ for which equality occurs in one of these inequalities if and only if $(X,g_+)$ is isometric to the hyperbolic space $(\mathbb{H}^{n+1},g_{\mathbb{H}})$.
\end{theorem}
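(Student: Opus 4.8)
\emph{(Plan.)} Fix $\hatgf\in[\ovgf]$. The idea is to work with a compactification $\hatg=\rho^2g_+$ whose restriction to $\partial X$ is exactly $\hatgf$; one may take the geodesic normal compactification, and in fact, since $\mathcal{Y}(\partial X,[\ovgf])\geq 0$ forces the bottom of the $L^2$-spectrum of $-\Delta_{g_+}$ to equal $n^2/4$, one may further solve the (linear) Dirichlet problem for the conformal Laplacian of that compactification to arrange $R_{\hatg}=0$ without changing $\hatgf$; I will take this scalar flat representative, as it streamlines the bookkeeping. By the Dirichlet-to-Neumann description (\ref{StekYam}), since $R_{\hatg}=0$, the number $\lambda:=\lambda_{1}(\sqrt{P_{\hatgf}})$ is the first eigenvalue of the mixed Steklov problem $\widehat{\Delta}U=0$ in $\overline{X}$, $\widehat{N}(U)+\tfrac{n-1}{2}H_{\hatg}U=\lambda U$ on $\partial X$; I fix a positive first eigenfunction $U$ and set $\phi:=U_{|\partial X}$, so that $\widehat{N}(U)=(\lambda-\tfrac{n-1}{2}H_{\hatg})\phi$ on $\partial X$. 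The goal is to feed $U$ into an integral identity on $(\overline{X},\hatg)$ and recover a version of (\ref{MainCSC}) weighted by $\phi^{2}$, from which (\ref{EigenEstimate1}) drops out via the variational characterization of $\lambda_{1}(P_{\hatgf})$.

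\emph{(Main computation.)} Since $R_{\hatg}=0$ we have $Ric_{\hatg}=E_{\hatg}=-(n-1)\big(\widehat{\nabla}^{2}\rho-\tfrac{\widehat{\Delta}\rho}{n+1}\hatg\big)$ by (\ref{TF-Ricci1}), $\widehat{\delta}E_{\hatg}=0$ by the Bianchi identity, and $\partial X$ is totally umbilical with $L_{\hatg}=H_{\hatg}\hatgf$. I would apply Reilly's formula to $U$: using $\widehat{\Delta}U=0$, the umbilicity, and the boundary relation for $\widehat{N}(U)$, this expresses $-\int_{\overline{X}}|\widehat{\nabla}^{2}U|^{2}\,dv_{\hatg}-\int_{\overline{X}}E_{\hatg}(\widehat{\nabla}U,\widehat{\nabla}U)\,dv_{\hatg}$ as a boundary integral in $\phi$, $\nbs\phi$, $H_{\hatg}$ and $\lambda$. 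Next, I would treat the interior term $\int_{\overline{X}}E_{\hatg}(\widehat{\nabla}U,\widehat{\nabla}U)$ by the same integration by parts that produced (\ref{GeneralId}) in the proof of Theorem \ref{MainRes}, but now carrying the weight $U^{2}$: since $\widehat{\delta}E_{\hatg}=0$ one obtains the weighted identity
\begin{eqnarray*}
\int_{\overline{X}}\rho U^{2}|E_{\hatg}|^{2}\,dv_{\hatg}=(n-1)\int_{\partial X}E_{\hatg}(\widehat{N},\widehat{N})\,\phi^{2}\,ds_{\hatgf}+2(n-1)\int_{\overline{X}}U\,E_{\hatg}(\widehat{\nabla}\rho,\widehat{\nabla}U)\,dv_{\hatg},
\end{eqnarray*}
together with (\ref{TraceFreeRicci}) (with $R_{\hatg}=0$) for the boundary integrand. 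Substituting this into Reilly's formula, discarding the two manifestly nonnegative terms $\int_{\overline{X}}|\widehat{\nabla}^{2}U|^{2}$ and $\int_{\overline{X}}\rho U^{2}|E_{\hatg}|^{2}$, and tracking constants, one should be left with
\begin{eqnarray*}
\frac{n(n-2)}{(n-1)^{2}}\,\lambda^{2}\int_{\partial X}\phi^{2}\,ds_{\hatgf}\ \geq\ \int_{\partial X}\Big(|\nbs\phi|^{2}+\frac{n-2}{4(n-1)}\Ss_{\hatgf}\,\phi^{2}\Big)\,ds_{\hatgf}.
\end{eqnarray*}
Since the right-hand side equals $\int_{\partial X}\phi\,P_{\hatgf}\phi\,ds_{\hatgf}\geq\lambda_{1}(P_{\hatgf})\int_{\partial X}\phi^{2}\,ds_{\hatgf}$, this gives $\lambda^{2}\geq\frac{(n-1)^{2}}{n(n-2)}\lambda_{1}(P_{\hatgf})$, i.e.\ (\ref{EigenEstimate1}).

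\emph{(The case $n=2$ and equality.)} For $n=2$ the curvature term in $P_{\hatgf}$ vanishes, $\lambda_{1}(P_{\hatgf})=0$, and (\ref{EigenEstimate1}) is vacuous; instead one runs the same circle of ideas using that $\int_{\partial X}\Ss_{\hatgf}\,ds_{\hatgf}=2\int_{\partial X}K_{\hatgf}\,ds_{\hatgf}=4\pi\chi(\partial X)$ by Gauss--Bonnet, which converts the boundary integral produced above into the topological quantity appearing in (\ref{EigenEstimate2}). As for equality in (\ref{EigenEstimate1}) or (\ref{EigenEstimate2}): it forces $\int_{\overline{X}}|\widehat{\nabla}^{2}U|^{2}=0$ and $\int_{\overline{X}}\rho U^{2}|E_{\hatg}|^{2}=0$, hence $E_{\hatg}\equiv 0$ on $X$; being scalar flat, $\hatg$ is then Ricci flat, and the rigidity part of Theorem \ref{MainRes} (via \cite[Lemma 1]{Xia}) identifies $(\overline{X},\hatg)$ with a Euclidean ball and $(X,g_{+})$ with $(\H^{n+1},g_{\H})$. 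The converse is the computation already recorded in the proof of Theorem \ref{MainRes}.

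\emph{(Main obstacle.)} The delicate point is the reorganization in the second step: one must check that all the lower-order boundary contributions coming out of Reilly's formula (those involving $\nbs H_{\hatg}$ and the various powers $\int H_{\hatg}^{k}\phi^{2}$, $\int H_{\hatg}|\nbs\phi|^{2}$) together with the interior term $\int_{\overline{X}}U\,E_{\hatg}(\widehat{\nabla}\rho,\widehat{\nabla}U)$ recombine \emph{exactly} into $\int_{\partial X}\big(|\nbs\phi|^{2}+\tfrac{n-2}{4(n-1)}\Ss_{\hatgf}\phi^{2}\big)$ with the sharp coefficient $\tfrac{n(n-2)}{(n-1)^{2}}$ in front of $\lambda^{2}\int_{\partial X}\phi^{2}$, the remainder being a sum of nonnegative terms; the round sphere, where everything is an equality, is the guide for fixing every constant. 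A subsidiary technical point is the existence of the scalar flat compactification with prescribed boundary metric, which is where the hypothesis $\mathcal{Y}(\partial X,[\ovgf])\geq 0$ is used (one could instead keep the geodesic compactification and carry the extra $\tfrac{n-1}{4n}R_{\hatg}U$ terms).
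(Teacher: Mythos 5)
Your endgame (the Rayleigh characterization of $\lambda_1(P_{\hatgf})$, Gauss--Bonnet for $n=2$, and rigidity via $E_{\hatg}\equiv 0$ plus the equality case of Theorem \ref{MainRes}) matches the paper, but the core of your argument --- the ``main computation'' --- is never carried out, and as set up it has a genuine gap. You fix a scalar flat compactification inducing $\hatgf$, take a harmonic Steklov eigenfunction $U$, and propose to combine Reilly's formula with a $U^2$-weighted version of the Obata-type integration by parts. The weighted identity you write is correct, but it produces the cross term $2(n-1)\int_{\overline{X}}U\,E_{\hatg}(\widehat{\nabla}\rho,\widehat{\nabla}U)\,dv_{\hatg}$, which has no sign and no evident cancellation against anything in Reilly's formula (whose interior curvature term is $\int E_{\hatg}(\widehat{\nabla}U,\widehat{\nabla}U)$ --- a different contraction of $E_{\hatg}$); meanwhile the boundary side of Reilly's formula contributes terms of the form $\int_{\partial X}\big(2\widehat{N}(U)\widehat{\LSs}\phi+nH_{\hatg}\widehat{N}(U)^2+H_{\hatg}|\nbs\phi|^2\big)ds_{\hatgf}$ with $\widehat{N}(U)=(\lambda-\tfrac{n-1}{2}H_{\hatg})\phi$, i.e.\ a collection of terms in $H_{\hatg}$, $\nbs H_{\hatg}$ and $\phi$ that you do not show recombine into $\int\phi P_{\hatgf}\phi$ with the sharp constant, the remainder being nonnegative. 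You flag this yourself as ``the delicate point''; it is in fact the entire proof, and I see no reason the proposed bookkeeping closes.

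The missing idea, which is how the paper proceeds, is not to weight the Obata argument at all: instead perform the conformal change $\widetilde{g}=U^{4/(n-1)}\hatg$ using the positive first eigenfunction $U$ of (\ref{StekYam}). The eigenvalue equation then says precisely that $R_{\widetilde{g}}=0$ and $H_{\widetilde{g}}=\tfrac{2}{n-1}\lambda_{1}(\sqrt{P_{\hatgf}})\,U^{-2/(n-1)}$, so Theorem \ref{MainRes} applies verbatim to $\widetilde{g}$ and yields $\tfrac{4n}{n-1}\lambda_{1}(\sqrt{P_{\hatgf}})^2\int_{\partial X}U^{2(n-2)/(n-1)}ds_{\hatgf}\geq\int_{\partial X}\Ss_{\widetilde{\mathfrak{g}}}\,ds_{\widetilde{\mathfrak{g}}}$; writing $\widetilde{\mathfrak{g}}=\widetilde{U}^{4/(n-2)}\hatgf$ with $\widetilde{U}=U^{(n-2)/(n-1)}$ converts the right-hand side into $\tfrac{4(n-1)}{n-2}\int_{\partial X}\widetilde{U}P_{\hatgf}\widetilde{U}\,ds_{\hatgf}$, and the Rayleigh quotient finishes. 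Note that the resulting test function is $U^{(n-2)/(n-1)}$, not $U$ itself as in your claimed intermediate inequality. This route also removes your auxiliary concern about producing a scalar flat compactification inducing exactly $\hatgf$: no such normalization is needed, since $\widetilde{g}$ is manufactured from an arbitrary compactification.
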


\begin{remark}
Here $\chi(\partial X)$ denotes the Euler characteristic of the surface $\partial X$. In this situation, the non negativity of the Yamabe invariant $\mathcal{Y}(\partial X,[\ovgf])$ reduces to the topological assumption $\chi(\partial X)\geq 0$ by the Gauss-Bonnet formula. 
\end{remark}

\begin{proof}
Let $\hatg$ be a conformal compactification of $(X,g_+)$. Then consider a smooth positive eigenfunction $U\in C^\infty(\overline{X})$  associated with $\lambda_{1}(\sqrt{P_{\hatgf}})$ which therefore satisfies (\ref{StekYam}). From the relations between the scalar and the mean curvatures of two conformally related metrics (see \cite{Escobar2} for example), the metric $\widetilde{g}:=U^{\frac{4}{n-1}}\hatg$ satisfies
\begin{equation}\label{ScalZero}
R_{\widetilde{g}}=0\quad\text{and}\quad H_{\widetilde{g}}=\frac{2}{n-1}\lambda_{1}(\sqrt{P_{\hatgf}}) U^{-\frac{2}{n-1}}
\end{equation}
so that we can apply Theorem \ref{MainRes}. Then the inequality (\ref{MainIneBis}) and the second equality in (\ref{ScalZero}) give
\begin{eqnarray}\label{IneqInter}
\frac{4n}{n-1}\lambda_{1}(\sqrt{P_{\hatgf}})^2\int_{\partial X} U^{2\frac{n-2}{n-1}}\,ds_{\hatgf}\geq\int_{\partial X} \Ss_{\widetilde{\mathfrak{g}}}\,ds_{\widetilde{\mathfrak{g}}}
\end{eqnarray}
since $ds_{\widetilde{\mathfrak{g}}}=U^{\frac{2n}{n-1}}\,ds_{\hatgf}$. Inequality (\ref{EigenEstimate2}) follows from the Gauss-Bonnet formula if $n=2$. For $n\geq 3$, we write $\widetilde{\mathfrak{g}}=\widetilde{U}^\frac{4}{n-2}\hatgf$ with $\widetilde{U}=U^{\frac{n-2}{n-1}}$ and then 
\begin{eqnarray*}
\Ss_{\widetilde{\mathfrak{g}}}=\frac{4(n-1)}{n-2}\widetilde{U}^{-\frac{n+2}{n-2}}P_{\hatgf}\widetilde{U}
\end{eqnarray*}
so that Inequality (\ref{IneqInter}) finally reads
\begin{eqnarray*}
\lambda_{1}(\sqrt{P_{\hatgf}})^2\int_{\partial X} \widetilde{U}^2\,ds_{\hatgf}\geq\frac{(n-1)^2}{n(n-2)}\int_{\partial X} \widetilde{U} P_{\hatgf} \widetilde{U}\,ds_{\hatgf}.
\end{eqnarray*}
We conclude the proof with the Rayleigh characterization of the first eigenvalue $\lambda_1(P_{\hatgf})$. If equality occurs the metric $\widetilde{g}$ is such that (\ref{MainIneBis}) is an equality so that the rigidity part of Theorem \ref{MainRes} applies. Conversely, it is straightforward to see that on the round sphere $(\mathbb{S}^n,\mathfrak{g}_{\mathbb{S}})$, the non zero constant functions are eigenfunctions for $P_{\mathfrak{g}_\mathbb{S}}$ and $\sqrt{P_{\mathfrak{g}_\mathbb{S}}}$ associated respectively to $n(n-2)/4$ and to $(n-1)/2$. This implies that the unit Euclidean ball is a scalar flat compactification of the hyperbolic space for which (\ref{EigenEstimate1}) is an equality for $n\geq 3$. On the other hand, since $\chi(\mathbb{S}^2)=2$, equality also holds for $(\mathbb{B}^3,g_\mathbb{B})$ in (\ref{EigenEstimate2}) and then the same conclusion is true for $n=2$. 
\end{proof}

Now recall that the eigenvalue problem (\ref{StekYam}) is tightly related to a Yamabe-type problem on manifolds with boundary and first addressed by Escobar in \cite{Escobar2}. This problem asks for, given a smooth compact Riemannian manifold with boundary, the existence of a conformally related metric with zero scalar curvature and constant mean curvature. This is a deep and very difficult problem which is now completely solved and we refer to \cite{MayerNdiaye1} where the interested reader will be able to find a complete bibliography on this problem. As in the closed case, its resolution needs the deep study of a conformal invariant similar to the Yamabe invariant (\ref{YamabeBoundary}) which is defined by 
\begin{eqnarray}\label{EscobarInvariant}
\mathcal{E}(\overline{X},[\ovg]):=\inf_{\widehat{g}\in[\ovg]_{{\rm sf}}}\frac{\int_{\partial X}H_{\widehat{g}}\,ds_{\widehat{\mathfrak{g}}}}{{\rm Vol}(\partial X,\widehat{\mathfrak{g}})^{\frac{n-1}{n}}}
\end{eqnarray}
where 
\begin{eqnarray*}
[\ovg]_{{\rm sf}}:=\{\widehat{g}\in[\ovg]\,/\,R_{\widehat{g}}=0\}.
\end{eqnarray*}
It is useful for our purpose to recall that this invariant satisfies 
\begin{eqnarray}\label{EscobarSphere}
\mathcal{E}(\overline{X},[\ovg])\leq\mathcal{E}(\mathbb{B}^{n+1},[g_\mathbb{B}])=\omega_n^{\frac{1}{n}}
\end{eqnarray}
by Escobar \cite{Escobar2}.

As a direct application of Theorem \ref{MainRes}, we also obtain a sharp inequality which relates the conformal invariants $\mathcal{Y}(\partial X,[\ovgf])$ and $\mathcal{E}(\overline{X},[\ovg])$. This result was already proved by the author in \cite{Raulot10} and by \cite{ChenLaiWang} under additional assumptions. 
\begin{corollary}\label{main}
Let $(X,g_+)$ be a Poincar\'e-Einstein manifold with a smooth representative in its conformal infinity and such that $\mathcal{Y}(\partial X,[\ovgf])\geq 0$. Then if the dimension $n\geq 3$, we have 
\begin{eqnarray}\label{Yam1}
n(n-1)\mathcal{E}(\overline{X},[\ovg])^2\geq\mathcal{Y}(\partial X,[\ovgf])
\end{eqnarray}
and if $n=2$, 
\begin{eqnarray}\label{Yam2}
\mathcal{E}(\overline{X},[\ovg])^2\geq 2\pi\chi(\partial X).
\end{eqnarray}
Equality occurs if and only if the boundary at infinity is the M\"obius sphere $(\mathbb{S}^n,[\mathfrak{g}_{\mathbb{S}}])$. In any case, $(X,g_+)$ is isometric to the hyperbolic space $(\mathbb{H}^{n+1},g_{\mathbb{H}})$. 
\end{corollary}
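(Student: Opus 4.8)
The plan is to derive the inequalities \eqref{Yam1} and \eqref{Yam2} by combining Theorem \ref{YamabeSteklov} with the variational characterizations \eqref{YamabeEigenvalues} and \eqref{EscobarInvariant}. First I would observe that the key link between the Escobar invariant $\mathcal{E}(\overline{X},[\ovg])$ and the first eigenvalue $\lambda_1(\sqrt{P_{\hatgf}})$ of the conformal half-Laplacian is the following: for a scalar flat metric $\widehat{g}\in[\ovg]_{\rm sf}$, the mean curvature $H_{\widehat{g}}$ plays the role of the boundary Neumann data in \eqref{StekYam}, so by the Rayleigh quotient characterization of $\lambda_1(\sqrt{P_{\hatgf}})$ one has, testing with the constant function,
\begin{eqnarray*}
\lambda_1(\sqrt{P_{\hatgf}})\,{\rm Vol}(\partial X,\widehat{\mathfrak{g}})^{\frac{1}{n}}\leq\frac{n-1}{2}\,\frac{\int_{\partial X}H_{\widehat{g}}\,ds_{\widehat{\mathfrak{g}}}}{{\rm Vol}(\partial X,\widehat{\mathfrak{g}})^{\frac{n-1}{n}}}.
\end{eqnarray*}
Taking the infimum over $\widehat{g}\in[\ovg]_{\rm sf}$ on the right-hand side gives $\frac{n-1}{2}\mathcal{E}(\overline{X},[\ovg])$ as an upper bound for a conformally invariant version of $\lambda_1(\sqrt{P_{\hatgf}})$; more precisely one uses that $\lambda_1(\sqrt{P_{\hatgf}})\,{\rm Vol}(\partial X,\widehat{\mathfrak{g}})^{1/n}$ is unchanged under conformal scaling within the scalar flat class.

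Next I would run the same reasoning on the lower side: squaring the estimate \eqref{EigenEstimate1} of Theorem \ref{YamabeSteklov} and using \eqref{YamabeEigenvalues}, which expresses $\mathcal{Y}(\partial X,[\ovgf])$ as $\frac{4(n-1)}{n-2}$ times the infimum of $\lambda_1(P_{\hatgf}){\rm Vol}(\partial X,\hatgf)^{2/n}$, one can bound $\mathcal{Y}(\partial X,[\ovgf])$ from above by a multiple of $\inf\big(\lambda_1(\sqrt{P_{\hatgf}})^2\,{\rm Vol}(\partial X,\widehat{\mathfrak{g}})^{2/n}\big)$. Chaining this with the upper bound for $\lambda_1(\sqrt{P_{\hatgf}})\,{\rm Vol}^{1/n}$ by $\frac{n-1}{2}\mathcal{E}$ obtained in the previous step, the constants $\frac{(n-1)^2}{n(n-2)}$ and $\frac{4(n-1)}{n-2}$ and $\big(\frac{n-1}{2}\big)^2$ should combine to yield exactly $n(n-1)\mathcal{E}(\overline{X},[\ovg])^2\geq\mathcal{Y}(\partial X,[\ovgf])$. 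For $n=2$ the same bookkeeping applies with \eqref{EigenEstimate2} and the Gauss--Bonnet replacement of $\mathcal{Y}$, giving \eqref{Yam2}. The dimensional-constant arithmetic is the one place where care is needed, but it is routine.

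For the equality case and the rigidity statement, I would argue that equality in \eqref{Yam1} or \eqref{Yam2} forces equality throughout the chain, in particular equality in \eqref{EigenEstimate1} (resp.\ \eqref{EigenEstimate2}) for the relevant compactification; the rigidity part of Theorem \ref{MainRes}, carried through Theorem \ref{YamabeSteklov}, then yields that $(X,g_+)$ is isometric to $(\mathbb{H}^{n+1},g_{\mathbb{H}})$, whose conformal infinity is the M\"obius sphere $(\mathbb{S}^n,[\mathfrak{g}_{\mathbb{S}}])$. Conversely, on the M\"obius sphere one checks directly, using the unit Euclidean ball as a scalar flat compactification and the explicit eigenvalues $n(n-2)/4$ for $P_{\mathfrak{g}_{\mathbb{S}}}$ and $(n-1)/2$ for $\sqrt{P_{\mathfrak{g}_{\mathbb{S}}}}$ on constants (already noted in the proof of Theorem \ref{YamabeSteklov}), together with the value $\mathcal{E}(\mathbb{B}^{n+1},[g_{\mathbb{B}}])=\omega_n^{1/n}$ from \eqref{EscobarSphere} and $\mathcal{Y}(\mathbb{S}^n,[\mathfrak{g}_{\mathbb{S}}])=n(n-1)\omega_n^{2/n}$ from \eqref{YamabeSphere}, that \eqref{Yam1} (resp.\ \eqref{Yam2}) is saturated. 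The main obstacle I anticipate is purely technical: making rigorous the claim that the infimum in \eqref{EscobarInvariant} interacts with the eigenvalue $\lambda_1(\sqrt{P_{\hatgf}})$ in the clean scaling-invariant way asserted above, i.e.\ justifying that testing the Rayleigh quotient on constants over all scalar flat representatives really produces the factor $\mathcal{E}(\overline{X},[\ovg])$ and not merely an upper bound by it --- though since we only need an inequality in the right direction, even the crude bound suffices, so no genuine difficulty remains.
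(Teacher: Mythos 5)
Your route is workable and the constants do combine correctly ($\frac{(n-1)^2}{n(n-2)}\cdot\frac{n-2}{4(n-1)}\cdot\frac{4}{(n-1)^2}\cdot n(n-1)=1$), but it is a genuine detour compared with the paper. The paper does not pass through Theorem \ref{YamabeSteklov} at all: it takes the Escobar minimizer $\hatg\in[\ovg]_{\rm sf}$ with $H_{\hatg}=\mathcal{E}(\overline{X},[\ovg])$ constant and ${\rm Vol}(\partial X,\hatgf)=1$, applies the integral inequality (\ref{MainIneBis}) of Theorem \ref{MainRes} directly to get $n(n-1)\mathcal{E}^2\geq\int_{\partial X}\Ss_{\hatgf}\,ds_{\hatgf}$, and then bounds the right-hand side below by $\mathcal{Y}(\partial X,[\ovgf])$ straight from the definition (\ref{YamabeBoundary}); equality then forces equality in (\ref{MainIneBis}) and the rigidity of Theorem \ref{MainRes} applies. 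Your version replaces this two-line argument by: (test-function bound on the Steklov quotient) $+$ (Theorem \ref{YamabeSteklov}) $+$ ((\ref{YamabeEigenvalues})), where Theorem \ref{YamabeSteklov} is itself a consequence of Theorem \ref{MainRes}. What your route buys is that the inequality part needs only the per-metric estimate followed by an infimum, so strictly speaking you do not need the Escobar--Yamabe minimizer to exist for (\ref{Yam1}) itself (you do still need it, or some substitute, for the equality analysis). What it costs is length and two small inaccuracies worth fixing. First, the parenthetical claim that $\lambda_{1}(\sqrt{P_{\hatgf}})\,{\rm Vol}(\partial X,\hatgf)^{1/n}$ is conformally invariant within the scalar-flat class is false (only its infimum over the class is a conformal invariant, exactly as for $\lambda_1(P_{\hatgf}){\rm Vol}^{2/n}$ versus $\mathcal{Y}$); fortunately you never actually use it, since the chain runs metric by metric and you take the infimum at the end, using $\mathcal{Y}\geq 0\Rightarrow\lambda_1(\sqrt{P_{\hatgf}})\geq0$ to square the test-function bound safely. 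Second, your converse in the equality case only verifies equality for the unit ball, i.e.\ it presupposes $X=\mathbb{H}^{n+1}$; the statement to prove is that the conformal infinity being the M\"obius sphere already forces equality, and for that you should argue as the paper does, combining the Aubin bound (\ref{YamabeSphere}) (an equality for the sphere) with the Escobar bound (\ref{EscobarSphere}) to squeeze $\mathcal{E}(\overline{X},[\ovg])=\omega_n^{1/n}$, after which the rigidity gives $X\cong\mathbb{H}^{n+1}$.
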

\begin{proof}
From the previous discussion, there exists a metric $\hatg\in[\ovg]$ such that
\begin{eqnarray*}
R_{\hatg}=0,\quad H_{\hatg}=\mathcal{E}(\overline{X},[\ovg])\quad\text{and}\quad{\rm Vol}(\partial X,\hatgf)=1
\end{eqnarray*} 
for which Inequality (\ref{MainIneBis}) gives immediately (\ref{Yam1}) and (\ref{Yam2}). The result follows directly from the expression (\ref{YamabeBoundary}) of the Yamabe invariant $\mathcal{Y}(\partial X,[\ovgf])$. If equality occurs, the equality is also achieved in (\ref{MainIneBis}) for the metric $\hatg$ and thus the rigidity part of Theorem \ref{MainRes} allows to conclude. Conversely if the boundary at infinity is the M\"obius sphere $(\mathbb{S}^n,[\mathfrak{g}_\mathbb{S}])$ it follows from (\ref{YamabeSphere}) and (\ref{EscobarSphere}) that equality holds in (\ref{Yam1}) and (\ref{Yam2}) and so equality is also achieved in (\ref{MainIneBis}) for a compactified metric on $\overline{X}$. We conclude that $(X,g_+)$ has to be the hyperbolic space by Theorem \ref{MainRes}. 
\end{proof}

Note that this corollary contains the well-known uniqueness of the hyperbolic space as the unique Poincar\'e-Einstein manifold whose boundary at infinity is the M\"obius sphere.

%%%%%%%%%%%%%%%%%%%%%%%%%%%%%%%%%%%%%%%%%%%%%%%%%%%%%%%%%%%%%%%%%%%%%%%%%%%%%%%%%%%%%%%%%%%%%%%%%%%%%%%%%

\section{Upper bounds for the renormalized volume in dimension $4$}\label{UpperBoundsRenorVol}

%%%%%%%%%%%%%%%%%%%%%%%%%%%%%%%%%%%%%%%%%%%%%%%%%%%%%%%%%%%%%%%%%%%%%%%%%%%%%%%%%%%%%%%%%%%%%%%%%%%%%%%%%

In this section, we will see that our main integral inequality (\ref{MainIneBis}) allows to obtain a new upper bound for the renormalized volume in term of the Yamabe-type invariant $\mathcal{E}(\overline{X},[\ovg])$. 

The volume of a Poincar\'e-Einstein manifold being infinite, it has been suggested by several works that an appropriate normalization should lead to the definition of an adapted notion of volume. This has been achieved by Henningson and Skenderis \cite{HenningsonSkenderis} and Graham \cite{Graham1} as follows. Here we only consider the four dimensional case. It appears that if $r$ denotes the geodesic defining function relative to $\hatgf\in[\ovgf]$, then for $\varepsilon >0$ sufficiently small, we can compute that
\begin{eqnarray*}
{\rm Vol}_{g_+}(\{r>\varepsilon\})=c_0\varepsilon^{-3}+c_2\varepsilon^{-1}+V(X,g_+)+o(1).
\end{eqnarray*}
The coefficients $c_j$, $j=1,2$, are integrals over $\partial X$ of local curvature expressions of the metric $\hatgf$ and the constant term $V(X,g_+)$ is called the renormalized volume of the Poincar\'e-Einstein manifold $(X,g_+)$ and does not depend of the choice of $\hatgf$. An important observation by Anderson \cite{Anderson1} is that this renormalized volume appears in the Chern-Gauss-Bonnet formula in the four dimensional case. This is in fact true in the $n$-dimensional case with $n$ odd by the work of Chang, Qing and Yang \cite{ChangQingYang2}. Let us sketch the proof of their result in the four dimensional case. First recall that the Chern-Gauss-Bonnet formula on a four dimensional compact Riemannian manifold with boundary $(\overline{X},\overline{g})$ can be written for $\hatg\in[\ovg]$ as
\begin{eqnarray*}
8\pi^2\chi(\overline{X})=\frac{1}{4}\int_{\overline{X}}|W_{\hatg}|^2\,dv_{\hatg}+\int_{\overline{X}}Q_{\hatg}\,dv_{\hatg}+2\int_{\partial X}\big(\mathcal{L}_{\hatg}+T_{\hatg}\big)ds_{\hatgf}
\end{eqnarray*}
where $W_{\hatg}$ is the Weyl tensor and $Q_{\hatg}$ and $T_{\hatg}$ are respectively the $Q$-curvature and the $T$-curvature of the metric $\hatg$ which are defined by
\begin{eqnarray*}
Q_{\hatg} & := & \frac{1}{6}\big(R^2_{\hatg}-3|Ric_{\hatg}|^2-\widehat{\Delta} R_{\hatg}\big)\\
T_{\hatg} & := & \frac{1}{12}\widehat{N}(R_{\hatg})+\frac{1}{2}R_{\hatg}H_{\hatg}-\<F_{\hatg},L_{\hatg}\>+3 H_{\hatg}^3-\frac{1}{3}{\rm Tr}_{\hatg}(L^3_{\hatg})-\widehat{\LSs} H_{\hatg}.
\end{eqnarray*}
Here $F_{\hatg}$ is the covariant symmetric tensor field of order two on $\partial X$ defined by
\begin{eqnarray*}
F_{\hatg}(Y,Z)={\rm Riem}_{\hatg}(X,\widehat{N},Y,\widehat{N})
\end{eqnarray*}
for all $Y$, $Z$ tangent vector fields to $\partial X$ and where ${\rm Riem}_{\hatg}$ is the Riemann tensor of $(\overline{X},\hatg)$. Moreover the function $\mathcal{L}_{\hatg}$ is a pointwise conformal invariant which vanishes when the boundary is totally umbilical. Since this property is fulfilled for any compactification of a Poincar\'e-Einstein manifold, we can assume without loss in generality that $\mathcal{L}_{\hatg}=0$. For the same reason, the $T$-curvature reduces to 
\begin{eqnarray}\label{TcurvOmb}
T_{\hatg}=\frac{1}{12}\widehat{N}(R_{\hatg})+\frac{1}{2}H_{\hatg}\Big(R_{\hatg}-2 Ric_{\hatg}(\widehat{N},\widehat{N})\Big)+2 H^3_{\hatg}-\widehat{\LSs} H_{\hatg},
\end{eqnarray}
and the Chern-Gauss-Bonnet formula then reads
\begin{eqnarray}\label{CGBFormula}
8\pi^2\chi(\overline{X})=\frac{1}{4}\int_{\overline{X}}|W_{\hatg}|^2\,dv_{\hatg}+Q(\overline{X},\hatg),
\end{eqnarray}
where
\begin{eqnarray*}
Q(\overline{X},\hatg):=\int_{\overline{X}}Q_{\hatg}\,dv_{\hatg}+2\int_{\partial X}T_{\hatg}ds_{\hatgf}
\end{eqnarray*}
is the total $Q$-curvature of $(\overline{X},\hatg)$. Note that from the topological invariance of the Euler characteristic and the conformal invariance of the Weyl tensor, we immediately deduce the conformal covariance of the total $Q$-curvature that's why it will be denoted by $Q(\overline{X},[\ovg])$ from now.

On the other hand, Feffermann and Graham \cite{FeffermanGraham2} proved that there exists a unique function $v\in C^\infty(X)$ such that
\begin{eqnarray*}
-\Delta_{g_+} v=3
\end{eqnarray*}
and with the asymptotic
\begin{eqnarray*}
v=\log r+A+Br^3
\end{eqnarray*}
where $A$, $B\in C^\infty(X)$, even in $r$ such that $A_{|\partial X}=0$. Here $r$ denotes the defining function associated to the metric $\hatgf$ in the conformal infinity. Then they related the value of $B$ on the boundary with the renormalized volume by the very nice formula
\begin{eqnarray}\label{FGFormula}
V(X,g_+)=\int_{\partial X} B_{|\partial X}\,ds_{\hatgf}.
\end{eqnarray}
In addition to this, it can be computed that for the compactified metric $\widetilde{g}=e^{2v}g_+$ it holds that
\begin{eqnarray*}
Q_{\widetilde{g}}=0\quad\text{and}\quad T_{\widetilde{g}}=3B_{|\partial X}
\end{eqnarray*}
so that the formula (\ref{CGBFormula}) for $\widetilde{g}$ and (\ref{FGFormula}) give 
\begin{eqnarray}\label{CGB-Einstein}
8\pi^2\chi(\overline{X})=\frac{1}{4}\int_{X}|W_{\widetilde{g}}|^2\,dv_{\widetilde{g}}+6V(X,g_+).
\end{eqnarray} 
Moreover, since the $L^2$-norm of the Weyl tensor is conformally invariant, we obtain the following formula by comparing (\ref{CGBFormula}) and (\ref{CGB-Einstein}): 
\begin{proposition}\label{RV-QT}
Let $(X,g_+)$ be a four dimensional Poincar\'e-Einstein manifold then we have
\begin{eqnarray}\label{RenorVol}
V(X,g_+)=\frac{1}{6}Q(\overline{X},[\ovg]).
\end{eqnarray}
\end{proposition}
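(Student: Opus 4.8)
The plan is to derive \eqref{RenorVol} simply by comparing the two instances of the Chern--Gauss--Bonnet formula that were already set up above: the general conformally covariant version \eqref{CGBFormula} written for an arbitrary compactification $\hatg\in[\ovg]$, and the special version \eqref{CGB-Einstein} obtained by choosing the Fefferman--Graham compactification $\widetilde g=e^{2v}g_+$. Concretely, first I would recall that $\chi(\overline X)$ is a topological invariant and hence the same in both formulas, and that $\int|W|^2\,dv$ is a pointwise conformal invariant of the $4$-dimensional metric, so $\int_X|W_{\widetilde g}|^2\,dv_{\widetilde g}=\frac14$-weighted term appearing in \eqref{CGBFormula} for any $\hatg\in[\ovg]$; more precisely $\frac14\int_{\overline X}|W_{\hatg}|^2\,dv_{\hatg}=\frac14\int_X|W_{\widetilde g}|^2\,dv_{\widetilde g}$, since $\overline X$ and $X$ differ by the measure-zero boundary and $\widetilde g$ is the restriction of a smooth metric on $X$ away from $\partial X$.

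Given these two invariances, subtracting \eqref{CGB-Einstein} from \eqref{CGBFormula} kills the $8\pi^2\chi(\overline X)$ term and the Weyl term, leaving
\begin{eqnarray*}
0=Q(\overline X,[\ovg])-6V(X,g_+),
\end{eqnarray*}
where on the left I have used that $Q(\overline X,\hatg)=Q(\overline X,\widetilde g)$ by the conformal covariance of the total $Q$-curvature noted just before the statement (so the value does not depend on which representative is used), and that for the Fefferman--Graham compactification one has $Q_{\widetilde g}=0$ and $T_{\widetilde g}=3B_{|\partial X}$, whence $Q(\overline X,\widetilde g)=2\int_{\partial X}3B_{|\partial X}\,ds_{\hatgf}=6V(X,g_+)$ by \eqref{FGFormula}. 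Rearranging gives $V(X,g_+)=\tfrac16 Q(\overline X,[\ovg])$, which is exactly \eqref{RenorVol}.

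There is essentially no hard analytic step here: the entire content has been front-loaded into the preceding discussion, namely the existence and asymptotics of the Fefferman--Graham function $v$, the identities $Q_{\widetilde g}=0$, $T_{\widetilde g}=3B_{|\partial X}$, the formula \eqref{FGFormula}, and the derivation of \eqref{CGB-Einstein}. The only point requiring a moment of care --- and thus the part I would write out most carefully --- is the bookkeeping of conformal and topological invariants: one must be sure that the Weyl $L^2$-term and the total $Q$-curvature are genuinely unchanged when passing between an arbitrary $\hatg\in[\ovg]$ and the special metric $\widetilde g$, and that no boundary contribution is lost in the Weyl term because $\widetilde g$ is only an \emph{interior} compactifying metric (it blows up at $\partial X$ like $r^{-0}\log r$-type behavior, but $|W_{\widetilde g}|^2\,dv_{\widetilde g}$ is conformally invariant as a density and integrable). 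Once that is granted, the proof is just the one-line subtraction above; so I would present it as: quote the two Chern--Gauss--Bonnet identities, invoke conformal invariance of $\int|W|^2$ and of $Q(\overline X,[\ovg])$, and equate.
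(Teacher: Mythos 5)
Your proposal is correct and is essentially identical to the paper's own argument: the proposition is obtained by comparing the two Chern--Gauss--Bonnet identities \eqref{CGBFormula} and \eqref{CGB-Einstein}, using the topological invariance of $\chi(\overline{X})$, the conformal invariance of the $L^2$-norm of the Weyl tensor, and the Fefferman--Graham identities $Q_{\widetilde{g}}=0$, $T_{\widetilde{g}}=3B_{|\partial X}$ together with \eqref{FGFormula}. No further comment is needed.
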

Note that this formula was already known for totally geodesic compactifications in relation with the $\sigma_2$-scalar curvature (see \cite{ChangQingYang2}). Here we just observed that it holds for {\it any} compactifications of a Poincar\'e-Einstein manifold. It was also noticed in \cite{Anderson1} that when this formula is compared to (\ref{CGBFormula}), it implies the following upper bound for the renormalized volume
\begin{eqnarray*}
V(X,g_+)\leq \frac{4\pi^2}{3}\chi(\overline{X})
\end{eqnarray*}
with equality if and only if $(X,g_+)$ is hyperbolic. Note that for the four dimensional hyperbolic space, it is not difficult to compute that
\begin{eqnarray}\label{RenVolHyperbolic}
V(\mathbb{H},g_{\mathbb{H}})=\frac{4\pi^2}{3}.
\end{eqnarray}

In this section, we first prove an upper bound for the renormalized volume in term of the Yamabe-type invariant introduced by Escobar in \cite{Escobar1}. This result is probably well-know but I didn't find it explicitly in the literature that's why a proof is included here. The Yamabe-type invariant we consider is defined by
\begin{eqnarray*}
\widetilde{\mathcal{E}}(\overline{X},[\ovg]):=\inf_{\widehat{g}\in[\ovg]_{\rm mf}}\frac{\int_{\overline{X}} R_{\widehat{g}}\,dv_{\widehat{g}}}{{\rm Vol}(\overline{X},\widehat{g})^{\frac{n-1}{n+1}}}
\end{eqnarray*}
where
\begin{eqnarray*}
[\ovg]_{{\rm mf}}:=\{\widehat{g}\in[\ovg]\,/\,H_{\widehat{g}}=0\}.
\end{eqnarray*}
It is naturally associated to the problem of proving, given a smooth Riemannian manifold with boundary, the existence of a conformally related metric with constant scalar curvature and zero mean curvature. This question was first addressed in \cite{Escobar1} and has recently been settled in \cite{MayerNdiaye2} (in which the interested reader will be able to find a complete bibliography on the subject). In our context, that is the four dimensional case with totally umbilical boundary it has been completely solved by Escobar in \cite[Theorem 4.1]{Escobar1}. We then prove:
\begin{theorem}\label{1-ReVol}
Let $(X,g_+)$ be a four dimensional Poincar\'e-Einstein manifold which has a smooth representative in its conformal infinity. Then the renormalized volume satisfies
\begin{eqnarray}\label{ReVol-1}
V(X,g_+)\leq\frac{1}{144}\widetilde{\mathcal{E}}(\overline{X},[\ovg])^2
\end{eqnarray}
and equality occurs if and only if $(X,g_+)$ is isometric to the hyperbolic space $(\mathbb{H}^4,g_{\mathbb{H}})$.
\end{theorem}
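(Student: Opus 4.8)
The strategy is to combine the key formula $V(X,g_+) = \frac{1}{6}Q(\overline{X},[\ovg])$ from Proposition \ref{RV-QT} with the Chern--Gauss--Bonnet formula (\ref{CGBFormula}) and the main integral inequality (\ref{MainIneBis}), after choosing a suitable representative in the conformal class. The natural candidate is a metric $\hatg \in [\ovg]$ realizing the infimum in $\widetilde{\mathcal{E}}(\overline{X},[\ovg])$: by Escobar's solution of the corresponding Yamabe-type problem in the four-dimensional totally umbilical case \cite{Escobar1}, there exists $\hatg \in [\ovg]$ with $H_{\hatg} = 0$ and $R_{\hatg}$ constant. Normalizing so that ${\rm Vol}(\overline{X},\hatg) = 1$, we have $R_{\hatg} = \widetilde{\mathcal{E}}(\overline{X},[\ovg])$.

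\medskip
\noindent\textbf{Key steps.} First I would specialize the Chern--Gauss--Bonnet formula (\ref{CGBFormula}) to this minimal-volume representative $\hatg$. Since $H_{\hatg} = 0$ and $R_{\hatg}$ is constant, the $T$-curvature (\ref{TcurvOmb}) collapses to $T_{\hatg} = \frac{1}{12}\widehat{N}(R_{\hatg}) = 0$ (the normal derivative of a constant vanishes), and the $Q$-curvature reduces to $Q_{\hatg} = \frac{1}{6}(R_{\hatg}^2 - 3|Ric_{\hatg}|^2)$ since $\widehat{\Delta}R_{\hatg} = 0$. Writing $Ric_{\hatg} = E_{\hatg} + \frac{R_{\hatg}}{4}\hatg$ and using $|Ric_{\hatg}|^2 = |E_{\hatg}|^2 + \frac{R_{\hatg}^2}{4}$, one gets $Q_{\hatg} = \frac{1}{24}R_{\hatg}^2 - \frac{1}{2}|E_{\hatg}|^2$. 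Hence $Q(\overline{X},\hatg) = \int_{\overline{X}} Q_{\hatg}\,dv_{\hatg} = \frac{1}{24}R_{\hatg}^2 - \frac{1}{2}\int_{\overline{X}}|E_{\hatg}|^2\,dv_{\hatg} \leq \frac{1}{24}\widetilde{\mathcal{E}}(\overline{X},[\ovg])^2$, using the volume normalization. Combining with Proposition \ref{RV-QT} gives $V(X,g_+) = \frac{1}{6}Q(\overline{X},[\ovg]) \leq \frac{1}{144}\widetilde{\mathcal{E}}(\overline{X},[\ovg])^2$, which is (\ref{ReVol-1}).

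\medskip
\noindent\textbf{The equality case.} Suppose equality holds. Tracing back, we need $\int_{\overline{X}}|E_{\hatg}|^2\,dv_{\hatg} = 0$, i.e. $\hatg$ is Einstein. A constant-scalar-curvature Einstein metric on a compact manifold with totally geodesic boundary (recall $H_{\hatg}=0$ and $\partial X$ is automatically totally umbilical, so totally geodesic) is very rigid; one then argues as in the equality case of Theorem \ref{MainRes}. Actually the cleanest route: if $\hatg$ is Einstein, then by (\ref{RicConf}) the defining function $\rho$ with $\hatg = \rho^2 g_+$ satisfies the Obata-type equation $\widehat{\nabla}^2\rho = \frac{\widehat{\Delta}\rho}{5}\hatg$ (in dimension $n+1=4$), and the same analysis as in the proof of Theorem \ref{MainRes} — $\widehat{\Delta}\rho$ constant, $H_{\hatg}$ related to $\widehat{\Delta}\rho$, so here $H_{\hatg}=0$ forces $\widehat{\Delta}\rho$ to have a definite sign issue, hence $R_{\hatg}$ must be such that things degenerate to the flat ball / hyperbolic space. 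I would invoke the rigidity portion of Theorem \ref{MainRes} directly: the Einstein metric $\hatg$ is scalar flat (one must check $R_{\hatg}=0$, which follows because an Einstein $\hatg$ with $H_{\hatg}=0$ and $E_{\hatg}=0$ forces $\widehat{\Delta}\rho = \frac{R_{\hatg}}{4n}\cdot(\text{const})$ and the boundary relation $\widehat{\Delta}\rho|_{\partial X} = -(n+1)H_{\hatg} = 0$ makes $\rho$ harmonic, so $\rho \equiv 0$, a contradiction unless $R_{\hatg}=0$). Then Theorem \ref{MainRes}'s equality clause gives $(X,g_+) \cong (\mathbb{H}^4, g_{\mathbb{H}})$. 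Conversely, for the hyperbolic space the flat ball compactification has $R_{\hatg}=0$ and $Ric_{\hatg}=0$, so all the inequalities above are equalities; a direct check with (\ref{RenVolHyperbolic}) that $V(\mathbb{H},g_{\mathbb{H}}) = \frac{4\pi^2}{3}$ equals $\frac{1}{144}\widetilde{\mathcal{E}}(\mathbb{B}^4,[g_{\mathbb{B}}])^2$ via $\widetilde{\mathcal{E}}(\mathbb{B}^4,[g_{\mathbb{B}}]) = 24\pi$ (from the round hemisphere realizing the minimum, $R = 12$, ${\rm Vol} = \frac{1}{2}\cdot 2\pi^2$ suitably normalized) completes the argument.

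\medskip
\noindent\textbf{Main obstacle.} The computational heart — reducing $Q_{\hatg}$ and $T_{\hatg}$ for the chosen representative and correctly tracking all normalization constants so that the factor $\frac{1}{144}$ comes out — is routine but error-prone. The genuinely delicate point is the equality case: one must be careful that Escobar's minimizer actually exists (guaranteed by \cite[Theorem 4.1]{Escobar1} in exactly this four-dimensional totally umbilical setting) and that from $E_{\hatg}=0$ one can legitimately feed $\hatg$ into the rigidity part of Theorem \ref{MainRes}, which requires first establishing $R_{\hatg}=0$ rather than merely constant. That small deduction — ruling out nonzero constant scalar curvature for an Einstein compactification with totally geodesic boundary — is where the argument needs the most care.
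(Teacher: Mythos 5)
Your derivation of the inequality itself is exactly the paper's: take Escobar's minimizer $\hatg\in[\ovg]_{\rm mf}$ with constant $R_{\hatg}$, $H_{\hatg}=0$ and unit volume, observe that the totally umbilical boundary is then totally geodesic, reduce the total $Q$-curvature to $\frac{1}{24}R_{\hatg}^2-\frac{1}{2}\int|E_{\hatg}|^2$, and apply Proposition \ref{RV-QT}. That part is fine.

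The equality case, however, contains a genuine error. You correctly reduce to: $\hatg$ is Einstein with totally geodesic boundary. But your attempt to deduce $R_{\hatg}=0$ so as to feed $\hatg$ into the scalar-flat rigidity of Theorem \ref{MainRes} is false, and the extremal example shows it: the paper's converse uses the round \emph{hemisphere} $\mathbb{S}^4_+$, which is an Einstein compactification of $\mathbb{H}^4$ with totally geodesic boundary and $R=12\neq 0$. Concretely, when $\hatg$ is Einstein but not Ricci-flat, the divergence of the Obata equation gives $\frac{n}{n+1}\widehat{\nabla}(\widehat{\Delta}\rho)=-Ric_{\hatg}(\widehat{\nabla}\rho)=-\frac{R_{\hatg}}{n+1}\widehat{\nabla}\rho$, so $\widehat{\Delta}\rho=-\frac{R_{\hatg}}{n}\rho+c$ rather than a constant; the boundary condition $\widehat{\Delta}\rho|_{\partial X}=-(n+1)H_{\hatg}=0$ only forces $c=0$, leaving $\rho$ a positive first Dirichlet eigenfunction when $R_{\hatg}>0$ --- no contradiction, and precisely what happens on the hemisphere. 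The correct route (the one the paper takes) is to invoke the rigidity theorem of Chen--Wang--Zhang \cite[Theorem 3.1]{ChenWangZhang} for Einstein conformal compactifications with totally geodesic boundary, not Theorem \ref{MainRes}. Your converse check is also off: the flat unit ball has $H_{\hatg}=1$ and so is not an admissible competitor for $\widetilde{\mathcal{E}}$; the correct value is $\widetilde{\mathcal{E}}(\mathbb{S}^4_+,[g_{\mathbb{S}}])=8\sqrt{3}\pi$ (from $R=12$, ${\rm Vol}=\frac{4\pi^2}{3}$ and exponent $\frac{n-1}{n+1}=\frac12$), not $24\pi$; then $\frac{1}{144}(8\sqrt{3}\pi)^2=\frac{4\pi^2}{3}=V(\mathbb{H}^4,g_{\mathbb{H}})$ as required.
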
 

\begin{proof}
From the formula (\ref{RenorVol}) in Proposition \ref{RV-QT}, we have
\begin{eqnarray*}
V(X,g_+)=\frac{1}{6}\int_{\overline{X}}\Big(\frac{1}{24}R_{\widehat{g}}^2-\frac{1}{2}|E_{\widehat{g}}|^2\Big)\,dv_{\widehat{g}}
\end{eqnarray*}
since $L_{\widehat{g}}=0$ for any $\widehat{g}\in[\ovg]_{{\rm mf}}$. In particular for such metrics it holds that
\begin{eqnarray}\label{bound1}
V(X,g_+)\leq\frac{1}{144}\int_{\overline{X}}R_{\widehat{g}}^2\,dv_{\widehat{g}}
\end{eqnarray} 
with equality if and only if the metric $\widehat{g}$ is Einstein. Now as explained in the beginning of this section, it follows from the work of Escobar that one can find $\widehat{g}\in[\ovg]$ such that 
\begin{eqnarray*}
R_{\widehat{g}}=\widetilde{\mathcal{E}}(\overline{X},[\ovg]),\quad H_{\widehat{g}}=0\quad\text{and}\quad{\rm Vol}(\overline{X},\widehat{g})=1.
\end{eqnarray*}
Since the boundary of any compactification of a Poincar\'e-Einstein manifold is totally umbilical, it is totally geodesic for $\widehat{g}$ since $\widehat{g}\in[\ovg]_{{\rm mf}}$. Then we can apply (\ref{bound1}) to $\widehat{g}$ and (\ref{ReVol-1}) follows directly. If equality holds, the manifold $(\overline{X},\widehat{g})$ is a compactification of a Poincar\'e-Einstein manifold which is Einstein with totally geodesic boundary, it follows form \cite[Theorem 3.1]{ChenWangZhang} that $(X,g_+)$ is the hyperbolic space. On the other hand, since from \cite{Escobar1}, it holds that
\begin{eqnarray*}
\widetilde{\mathcal{E}}(\mathbb{S}^{4}_+,[g_{\mathbb{S}}])=8\sqrt{3}\pi,
\end{eqnarray*}
it is obvious from (\ref{RenVolHyperbolic}) that the round hemisphere is a compactification of the hyperbolic space for which the inequality (\ref{ReVol-1}) is an equality.
\end{proof}

\begin{remark}
It is also a well-known result of Escobar \cite{Escobar1} that 
\begin{eqnarray*}
\widetilde{\mathcal{E}}(\overline{X},[\ovg])\leq \widetilde{\mathcal{E}}(\mathbb{S}^{4}_+,[g_{\mathbb{S}}])=8\sqrt{3}\pi
\end{eqnarray*}
with equality if and only if $(\overline{X},[\ovg])$ is the four dimensional round hemisphere $(\mathbb{S}^{4}_+,[g_{\mathbb{S}}])$. In particular the inequality (\ref{ReVol-1}) gives an alternative proof of the well-known inequality \cite{ChangQingYang1}
\begin{eqnarray}\label{RV-Upper}
V(X,g_+)\leq V(\mathbb{H},g_\mathbb{H})=\frac{4\pi^2}{3},
\end{eqnarray}
with equality if and only if $(X,g_+)$ is isometric to the hyperbolic space. 
\end{remark}

As a direct consequence, we easily deduce:
\begin{corollary}
Let $(X,g_+)$ be a four dimensional Poincar\'e-Einstein manifold which has a smooth representative in its conformal infinity. If $\widetilde{\mathcal{E}}(\overline{X},[\ovg])=0$ then the renormalized volume of $(X,g_+)$ satisfies $V(X,g_+)<0$.
\end{corollary}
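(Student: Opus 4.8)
The plan is to deduce this immediately from Theorem \ref{1-ReVol} together with its rigidity statement. Under the hypothesis $\widetilde{\mathcal{E}}(\overline{X},[\ovg])=0$, the inequality (\ref{ReVol-1}) reads $V(X,g_+)\leq 0$, so the only work is to promote this to a strict inequality, i.e.\ to exclude the case $V(X,g_+)=0$.

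Suppose $V(X,g_+)=0$. Then both sides of (\ref{ReVol-1}) vanish, so equality holds there, and the rigidity part of Theorem \ref{1-ReVol} forces $(X,g_+)$ to be isometric to $(\mathbb{H}^4,g_{\mathbb{H}})$. But $V(\mathbb{H}^4,g_{\mathbb{H}})=\frac{4\pi^2}{3}>0$ by (\ref{RenVolHyperbolic}), contradicting $V(X,g_+)=0$; hence $V(X,g_+)<0$. Equivalently, one can argue directly at the level of the proof of Theorem \ref{1-ReVol}: choose, by Escobar's solution of the relevant Yamabe-type problem in the four dimensional totally umbilical case, a metric $\widehat{g}\in[\ovg]$ with $R_{\widehat{g}}=\widetilde{\mathcal{E}}(\overline{X},[\ovg])=0$, $H_{\widehat{g}}=0$ and ${\rm Vol}(\overline{X},\widehat{g})=1$; since the boundary is then totally geodesic, the formula from Proposition \ref{RV-QT} used in that proof collapses to $V(X,g_+)=-\frac{1}{12}\int_{\overline{X}}|E_{\widehat{g}}|^2\,dv_{\widehat{g}}\leq 0$, with equality only if $\widehat{g}$ is Einstein, hence Ricci-flat, which by Corollary \ref{RicciFlat-PE} again forces $(X,g_+)=(\mathbb{H}^4,g_{\mathbb{H}})$ and is excluded by (\ref{RenVolHyperbolic}).

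There is essentially no obstacle here: the statement is a corollary in the strict sense, a one-line consequence of Theorem \ref{1-ReVol}. The only point requiring a moment's attention is the observation that the equality case of Theorem \ref{1-ReVol} cannot coexist with $V(X,g_+)=0$, precisely because the four dimensional hyperbolic space has strictly positive renormalized volume.
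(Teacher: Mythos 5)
Your argument is correct and is exactly the intended one: the paper states this as a ``direct consequence'' of Theorem \ref{1-ReVol} without further proof, and the point is precisely that $\widetilde{\mathcal{E}}(\overline{X},[\ovg])=0$ gives $V(X,g_+)\leq 0$, while $V(X,g_+)=0$ would trigger the rigidity case and force $(X,g_+)\cong(\mathbb{H}^4,g_{\mathbb{H}})$, contradicting $V(\mathbb{H}^4,g_{\mathbb{H}})=\tfrac{4\pi^2}{3}>0$. Your alternative in-proof version via $V(X,g_+)=-\tfrac{1}{12}\int_{\overline{X}}|E_{\widehat{g}}|^2\,dv_{\widehat{g}}$ is also sound.
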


It is then a natural question to ask if one can relate the renormalized volume with the Yamabe-type invariant $\mathcal{E}(\overline{X},[\ovg])$ defined by (\ref{EscobarInvariant}). We will see that this is indeed possible but the proof is much more involved since it relies on the integral inequality (\ref{MainIneBis}). First note that the Gauss formula (\ref{GaussFormula}) in the expression (\ref{TcurvOmb}) gives  
\begin{eqnarray*}
T_{\hatg}=\frac{1}{12}\widehat{N}(R_{\hatg})+\frac{1}{2}H_{\hatg}\Big(\Ss_{\hatgf}-6H^2_{\hatg}\Big)+2 H^3_{\hatg}-\widehat{\LSs} H_{\hatg}.
\end{eqnarray*}
Then if $\widehat{g}\in[\ovg]_{{\rm sf}}$, Proposition \ref{RV-QT} implies that
\begin{eqnarray*}
V(X,g_+)=-\frac{1}{12}\int_{\overline{X}}|Ric_{\widehat{g}}|^2\,dv_{\widehat{g}}+\frac{1}{6}\int_{\partial X}H_{\widehat{g}}\Big(\Ss_{\widehat{\mathfrak{g}}}-6H^2_{\widehat{g}}\Big)\,ds_{\widehat{\mathfrak{g}}}+\frac{2}{3}\int_{\partial X}H^3_{\widehat{g}}\,ds_{\widehat{\mathfrak{g}}}
\end{eqnarray*} 
that is
\begin{eqnarray}\label{QcurvatureOmb1}
\frac{2}{3}\int_{\partial X}H^3_{\widehat{g}}\,ds_{\widehat{\mathfrak{g}}}\geq V(X,g_+)+\frac{1}{6}\int_{\partial X}H_{\widehat{g}}\Big(6H^2_{\widehat{g}}-\Ss_{\widehat{\mathfrak{g}}}\Big)\,ds_{\widehat{\mathfrak{g}}},
\end{eqnarray}
with equality if and only if $\hatg$ is a Ricci-flat metric. Now it follows from \cite[Theorem 6.1]{Escobar2} that there exists $\widehat{g}\in[\ovg]$ such that
\begin{eqnarray*}
R_{\widehat{g}}=0,\quad H_{\widehat{g}}=\mathcal{E}(\overline{X},[\ovg])\quad\text{and}\quad{\rm Vol}(\partial X,\widehat{\mathfrak{g}})=1,
\end{eqnarray*}
and for which the inequality (\ref{QcurvatureOmb1}) reads as
\begin{eqnarray*}
\frac{2}{3}\mathcal{E}(\overline{X},[\ovg])^3\geq V(X,g_+)+\frac{1}{6}\mathcal{E}(\overline{X},[\ovg])\int_{\partial X}\Big(6H^2_{\widehat{g}}-\Ss_{\widehat{\mathfrak{g}}}\Big)\,ds_{\widehat{\mathfrak{g}}}.
\end{eqnarray*}
Since $\widehat{g}\in[\ovg]_{{\rm sf}}$, the inequality (\ref{MainIneBis}) applies so that the second term in the right-hand side of the previous inequality is non-negative as soon as $\mathcal{E}(\overline{X},[\ovg])\geq 0$. We then finally obtain:
\begin{theorem}\label{RV4}
Let $(X,g_+)$ be a four dimensional Poincar\'e-Einstein manifold which has a smooth representative in its conformal infinity. Then if $\mathcal{Y}(\partial X,[\ovgf])\geq 0$ it holds that 
\begin{eqnarray}\label{ReVol-2}
 V(X,g_+)\leq\frac{2}{3}\mathcal{E}(\overline{X},[\ovg])^3.
\end{eqnarray}
Moreover equality occurs if and only if $(X,g_+)$ is isometric to the hyperbolic space $(\mathbb{H}^4,g_{\mathbb{H}})$.
\end{theorem}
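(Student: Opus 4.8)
The plan is to combine Proposition \ref{RV-QT} with the boundary-version of the Chern--Gauss--Bonnet decomposition, and then feed the resulting boundary integral into the main integral inequality (\ref{MainIneBis}). First I would substitute the Gauss formula (\ref{GaussFormula}) into the expression (\ref{TcurvOmb}) for the $T$-curvature, which in the four dimensional totally umbilical case reduces it to $T_{\hatg}=\tfrac{1}{12}\widehat{N}(R_{\hatg})+\tfrac{1}{2}H_{\hatg}(\Ss_{\hatgf}-6H^2_{\hatg})+2H^3_{\hatg}-\widehat{\LSs}H_{\hatg}$. Evaluating the renormalized volume formula $V(X,g_+)=\tfrac{1}{6}Q(\overline{X},[\ovg])$ on a scalar flat compactification $\hatg\in[\ovg]_{\rm sf}$, the bulk term $\int_{\overline{X}}Q_{\hatg}\,dv_{\hatg}$ collapses (since $R_{\hatg}=0$, $Q_{\hatg}=-\tfrac{1}{2}|Ric_{\hatg}|^2=-\tfrac12|E_{\hatg}|^2$), the $\widehat{N}(R_{\hatg})$ and $\widehat{\LSs}H_{\hatg}$ terms integrate to zero, and we are left with
\begin{eqnarray*}
\frac{2}{3}\int_{\partial X}H^3_{\widehat{g}}\,ds_{\widehat{\mathfrak{g}}}\geq V(X,g_+)+\frac{1}{6}\int_{\partial X}H_{\widehat{g}}\Big(6H^2_{\widehat{g}}-\Ss_{\widehat{\mathfrak{g}}}\Big)\,ds_{\widehat{\mathfrak{g}}},
\end{eqnarray*}
with equality iff $\hatg$ is Ricci-flat. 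This is exactly inequality (\ref{QcurvatureOmb1}) in the excerpt.

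Next I would invoke Escobar's solution of the scalar-flat / constant-mean-curvature Yamabe problem (as quoted from \cite[Theorem 6.1]{Escobar2}) to pick a distinguished representative $\widehat{g}\in[\ovg]$ with $R_{\widehat{g}}=0$, $H_{\widehat{g}}=\mathcal{E}(\overline{X},[\ovg])$ constant, and ${\rm Vol}(\partial X,\widehat{\mathfrak{g}})=1$. Plugging this into (\ref{QcurvatureOmb1}), the constancy of $H_{\widehat{g}}$ lets me pull it out of every integral, yielding
\begin{eqnarray*}
\frac{2}{3}\mathcal{E}(\overline{X},[\ovg])^3\geq V(X,g_+)+\frac{1}{6}\mathcal{E}(\overline{X},[\ovg])\int_{\partial X}\Big(6H^2_{\widehat{g}}-\Ss_{\widehat{\mathfrak{g}}}\Big)\,ds_{\widehat{\mathfrak{g}}}.
\end{eqnarray*}
Since $\widehat{g}$ is scalar flat, Theorem \ref{MainRes} applies: in dimension $n=3$ inequality (\ref{MainIneBis}) reads $6\int_{\partial X}H^2_{\widehat{g}}\,ds_{\widehat{\mathfrak{g}}}\geq\int_{\partial X}\Ss_{\widehat{\mathfrak{g}}}\,ds_{\widehat{\mathfrak{g}}}$, so the integral $\int_{\partial X}(6H^2_{\widehat{g}}-\Ss_{\widehat{\mathfrak{g}}})\,ds_{\widehat{\mathfrak{g}}}$ is non-negative. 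Here I must check that $\mathcal{Y}(\partial X,[\ovgf])\geq 0$ forces $\mathcal{E}(\overline{X},[\ovg])\geq 0$ — which is immediate from Corollary \ref{main} (or directly: if $\mathcal{E}$ were negative, so would be the Yamabe-type Escobar invariant, forcing the first eigenvalue of $\sqrt{P_{\hatgf}}$ to be negative, contradicting $\mathcal{Y}\ge 0$). Dropping the non-negative term then gives $V(X,g_+)\leq\tfrac{2}{3}\mathcal{E}(\overline{X},[\ovg])^3$, which is (\ref{ReVol-2}).

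For the equality case: if $V(X,g_+)=\tfrac{2}{3}\mathcal{E}(\overline{X},[\ovg])^3$, then both inequalities used must be equalities. Equality in (\ref{QcurvatureOmb1}) forces $\widehat{g}$ to be Ricci-flat, and then Corollary \ref{RicciFlat-PE} — or directly the rigidity part of Theorem \ref{MainRes}, since a Ricci-flat compactification is scalar flat and realizes equality in (\ref{MainIneBis}) via the Gauss formula — shows $(X,g_+)$ is the hyperbolic space. Conversely, for $(\mathbb{H}^4,g_{\mathbb{H}})$ one takes the unit Euclidean ball compactification, for which $V(\mathbb{H},g_{\mathbb{H}})=\tfrac{4\pi^2}{3}$ by (\ref{RenVolHyperbolic}) and $\mathcal{E}(\mathbb{B}^4,[g_\mathbb{B}])=\omega_3^{1/3}$ by (\ref{EscobarSphere}); a direct check that $\tfrac{2}{3}(\omega_3^{1/3})^3=\tfrac{2}{3}\omega_3=\tfrac{4\pi^2}{3}$ (using $\omega_3=2\pi^2$, the volume of $\mathbb{S}^3$) confirms equality. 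The main obstacle I anticipate is purely bookkeeping: carefully verifying that on a scalar flat compactification the several boundary and bulk terms in the Chern--Gauss--Bonnet / $T$-curvature expansion that should vanish or combine actually do so — in particular that $\int_{\partial X}\widehat{N}(R_{\hatg})\,ds_{\hatgf}$ and $\int_{\partial X}\widehat{\LSs}H_{\hatg}\,ds_{\hatgf}$ drop out (the latter by the divergence theorem on the closed boundary, the former because $R_{\hatg}\equiv 0$ makes its normal derivative identically zero) — rather than any conceptual difficulty, since all the heavy lifting is already packaged in Theorem \ref{MainRes} and Proposition \ref{RV-QT}.
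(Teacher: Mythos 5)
Your proposal is correct and follows essentially the same route as the paper: reduce the $T$-curvature via the Gauss formula on a scalar flat compactification, drop the non-positive bulk term $-\tfrac{1}{12}\int|Ric_{\hatg}|^2$ to get (\ref{QcurvatureOmb1}), evaluate on Escobar's constant-mean-curvature scalar flat representative, and discard the remaining boundary term using (\ref{MainIneBis}), with rigidity via Ricci-flatness and Corollary \ref{RicciFlat-PE}. The only slip is the claim that $\mathcal{Y}(\partial X,[\ovgf])\geq 0\Rightarrow\mathcal{E}(\overline{X},[\ovg])\geq 0$ is ``immediate from Corollary \ref{main}'' (that corollary only controls $\mathcal{E}^2$); the correct justification is the one in your parenthetical and in Section \ref{HalfConfLaplacian}, namely that the sign of $\mathcal{E}$ agrees with that of $\lambda_1(\sqrt{P_{\hatgf}})$, which is non negative when $\mathcal{Y}\geq 0$.
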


\begin{proof}
The previous discussion holds since, as explained in Section \ref{HalfConfLaplacian}, the non negativity of $\mathcal{Y}(\partial X,[\ovgf])$ implies the non negativity of $\mathcal{E}(\overline{X},[\ovg])$. On the other hand, if equality holds in (\ref{ReVol-2}) we conclude that $(\overline{X},\widehat{g})$ is a Ricci-flat conformal compactification of the Poincar\'e-Einstein manifold $(X,g_+)$ and so Corollary \ref{RicciFlat-PE} implies that it has to be the hyperbolic space. Finally, since from (\ref{EscobarSphere}) we have 
\begin{eqnarray*}
\mathcal{E}(\mathbb{B}^{4}_+,[g_{\mathbb{B}}])^3=2\pi^2,
\end{eqnarray*}
it follows from (\ref{RenVolHyperbolic}) that the unit Euclidean ball is a compactification of the hyperbolic space for which the inequality (\ref{ReVol-2}) is an equality.
\end{proof}

Note that combining (\ref{EscobarSphere}) with (\ref{ReVol-2}) gives an alternative proof of the inequality (\ref{RV-Upper}).

%%%%%%%%%%%%%%%%%%%%%%%%%%%%%%%%%%%%%%%%%%%%%%%%%%%%%%%%%%%%%%%%%%%%%%%%%%%%%%%%%%%%%%%%%%%%%%%%%%%%%%%%%

\section{Lower bounds for the first eigenvalue of Dirac operators}\label{DiracEstimates}

%%%%%%%%%%%%%%%%%%%%%%%%%%%%%%%%%%%%%%%%%%%%%%%%%%%%%%%%%%%%%%%%%%%%%%%%%%%%%%%%%%%%%%%%%%%%%%%%%%%%%%%%%

In this section, we prove new lower bounds for the first non negative eigenvalue of Dirac operators in the context of Poincar\'e-Einstein manifolds. 

Let us briefly recall some standard facts on spin manifolds. For more details on this subject, we refer to \cite{BourguignonHijaziMilhoratMoroianu,Friedrich,Ginoux,LawsonMichelsohn}. On a $(n+1)$-dimensional compact Riemannian spin manifold $(\overline{X},\hatg)$ with boundary, there exists a smooth Hermitian vector bundle over $\overline{X}$ called the spinor bundle which will be denoted by $\Sigma\overline{X}$. The sections of this bundle are called spinors. Moreover, the tangent bundle $T\overline{X}$ acts on $\Sigma\overline{X}$ by Clifford multiplication $Y\otimes \psi\mapsto \widehat{\gamma}(Y)\psi$ for any tangent vector fields $Y$ and any spinor fields $\psi$. On the other hand, the Riemannian Levi-Civita connection $\widehat{\nabla}$ lifts to the so-called spin Levi-Civita connection (also denoted by $\widehat{\nabla}$) and defines a metric connection on $\Sigma\overline{X}$ that preserves the Clifford multiplication. The Dirac operator is then the first order elliptic differential operator acting on the spinor bundle $\Sigma\overline{X}$ given by $D_{\hatg}:=\widehat{\gamma}\circ\widehat{\nabla}$. Moreover, the spin structure on $\overline{X}$ induces (via the unit normal field to $\partial X$) a spin structure on its boundary which allows to define the {\it extrinsic} spinor bundle $\RSB:=\Sigma\overline{X}_{|\partial X}$ over $\partial X$ on which there exists a Clifford multiplication $\widehat{\mult}$ and a metric connection $\widehat{\nbs}$. Similarly, the extrinsic Dirac operator is defined by taking the Clifford trace of the covariant derivative $\widehat{\nbs}$ that is $\D_{\hatgf}:=\widehat{\mult}\circ\widehat{\nbs}$. From the spin structure on $\partial X$, one can also construct an {\it intrinsic} spinor bundle for the induced metric $\hatgf$ denoted by $\Sigma\partial X$ and endowed with a Clifford multiplication $\widehat{\gamma}^{\partial X}$ and a spin Levi-Civita connection $\widehat{\nabla}^{\partial X}$. Note that the (intrinsic) Dirac operator on $(\partial X,\hatgf)$ is obviously defined by $\widehat{\D}=\widehat{\gamma}^{\partial X}\circ\widehat{\nabla}^{\partial X}$. In fact, we have an isomorphism 
$$
\big(\RSB,\widehat{\nbs},\widehat{\mult}\big)\simeq
\left\lbrace
\begin{array}{ll}
\big(\Sigma\partial X,\widehat{\nabla}^{\partial X},\widehat{\gamma}^{\partial X}\big) & \text{ if } n \text{ is even}\\
\big(\Sigma\partial X,\widehat{\nabla}^{\partial X},\widehat{\gamma}^{\partial X}\big)\oplus\big(\Sigma\partial X,\widehat{\nabla}^{\partial X},-\widehat{\gamma}^{\partial X}\big)& \text{ if } n \text{ is odd}
\end{array}
\right.
$$
so that the restriction of a spinor field on $\overline{X}$ to $\partial X$ and the extension of a spinor field on $\partial X$ to $\overline{X}$ are well-defined. These identifications also imply in particular that the spectrum of the extrinsic Dirac operator is an intrinsic invariant of the boundary which means that it only depends on the spin and Riemannian structures of $\partial X$ and not on how it is embedded in $\overline{X}$.  

As noticed in \cite{AnderssonDahl,HijaziMontiel3}, it is important to pay attention to the involved spin structures. Since a Poincar\'e-Einstein manifold is topologically given by the interior of a compact and connected manifold with boundary, we will say that $X$ is spin if the manifold with boundary $\overline{X}=X\cup\partial X$ is spin and then $\partial X$ will always be endowed with the induced spin structure.

%%%%%%%%%%%%%%%%%%%%%%%%%%%%%%%%%%%%%%%%%%%%%%%%%%%%%%%%%%%%%%%%%%%%%%%%%%%%%%%%%%%%%%%%%%%%%%%%%%%%%%%%%

\subsection{The boundary Dirac operator}\label{BoundaryDiracOperator}

%%%%%%%%%%%%%%%%%%%%%%%%%%%%%%%%%%%%%%%%%%%%%%%%%%%%%%%%%%%%%%%%%%%%%%%%%%%%%%%%%%%%%%%%%%%%%%%%%%%%%%%%%

In this part, we apply our results to the first non negative eigenvalue of the boundary Dirac operator $\D_{\hatgf}$. In view of the previous discussion, all the next results also hold for the absolute value of the first non zero eigenvalue of the intrinsic Dirac operator. For this, we need an inequality proved by Hijazi, Montiel and Zhang in \cite{HijaziMontielZhang2} in the context of compact Riemannian spin manifolds with boundary which relates this eigenvalue, denoted by $\lambda_1(\D_{\hatgf})$, and the first eigenvalue of the problem (\ref{StekYam}). In our setting, we can refine the equality case and state their result as follow:
\begin{theoremsub}\label{HMZ}
Let $(X,g_+)$ be a spin Poincar\'e-Einstein manifold which has a smooth representative in its conformal infinity $(\partial X,[\ovgf])$. Then if the Yamabe invariant $\mathcal{Y}(\partial X,[\ovgf])$ is non negative, the first eigenvalue $\lambda_1(\D_{\widehat{\mathfrak{g}}})$ of the boundary Dirac operator satisfies
\begin{eqnarray}\label{HijaziMontielZhang}
\lambda_1(\D_{\widehat{\mathfrak{g}}})\geq\frac{n}{n-1}\lambda_{1}(\sqrt{P_{\widehat{\mathfrak{g}}}})
\end{eqnarray}  
for any $\hatg\in[\ovg]$ with $\ovg=\rho^2 g_+$ and $\rho$ is a smooth defining function. Moreover, there exists a compactification of $(X,g_+)$ for which (\ref{HijaziMontielZhang}) is an equality if and only if $(X,g_+)$ is isometric to the hyperbolic space $(\mathbb{H}^{n+1},g_{\mathbb{H}})$.  
\end{theoremsub}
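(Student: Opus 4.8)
The plan is to reduce the stated Dirac eigenvalue estimate to the already-established comparison of Theorem \ref{YamabeSteklov} by invoking the Hijazi--Montiel--Zhang spinorial inequality in the form they prove in \cite{HijaziMontielZhang2}. First I would recall that on a compact Riemannian spin manifold with boundary $(\overline{X},\hatg)$ whose boundary is a hypersurface with (inner) mean curvature $H_{\hatg}$, a conformal argument of Hijazi--Montiel--Zhang using a test spinor built from a solution of the boundary value problem $\D_{\hatgf}\varphi = \lambda_1(\D_{\hatgf})\varphi$ extended harmonically (in the Dirac sense) into the interior yields, after a conformal change adapted to make the interior term vanish, the inequality $\lambda_1(\D_{\hatgf}) \geq \tfrac{n}{n-1}\mu_1$, where $\mu_1$ is the first eigenvalue of the conformally covariant boundary problem (\ref{StekYam}); that is exactly $\lambda_1(\sqrt{P_{\hatgf}})$ in the normalization adopted in Section \ref{HalfConfLaplacian}. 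Since the spectra of $\D_{\hatgf}$ and of $\sqrt{P_{\hatgf}}$ transform conformally in a compatible way, it suffices to prove (\ref{HijaziMontielZhang}) for one conformal representative, and the non-negativity hypothesis $\mathcal{Y}(\partial X,[\ovgf])\geq 0$ guarantees (via \cite[Corollary 5]{HijaziMontiel3}, already cited in the excerpt) that $\lambda_1(\sqrt{P_{\hatgf}})$ is finite and non-negative, so all quantities make sense.

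Next, the main point of this paper is the refinement of the equality case. The key steps are: (i) if equality holds in (\ref{HijaziMontielZhang}) for some compactification, then tracing through the Hijazi--Montiel--Zhang argument forces the interior conformal metric to carry a nontrivial \emph{parallel} (or at least imaginary-Killing-type, but here parallel after the scalar-flat conformal change) spinor on $(\overline{X},\tilde g)$ with $R_{\tilde g}=0$ and $H_{\tilde g}$ constant; (ii) in particular the associated boundary spinor realizes equality in (\ref{HijaziMontielZhang}), hence the corresponding eigenfunction $U$ of (\ref{StekYam}) gives, via $\tilde g = U^{4/(n-1)}\hatg$, a scalar-flat compactification with constant mean curvature for which equality holds in (\ref{EigenEstimate1}) of Theorem \ref{YamabeSteklov} as well — this is because the chain of inequalities $\lambda_1(\D_{\hatgf})^2 \geq \tfrac{n^2}{(n-1)^2}\lambda_1(\sqrt{P_{\hatgf}})^2 \geq \tfrac{n^2}{(n-1)^2}\cdot\tfrac{(n-1)^2}{n(n-2)}\lambda_1(P_{\hatgf}) = \tfrac{n}{n-2}\lambda_1(P_{\hatgf})$ collapses only if each link is an equality; (iii) apply the rigidity part of Theorem \ref{YamabeSteklov} (equivalently, of Theorem \ref{MainRes}) to conclude that $(X,g_+)$ is isometric to the hyperbolic space $(\mathbb{H}^{n+1},g_{\mathbb{H}})$. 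For the converse direction, on $(\mathbb{B}^{n+1},g_{\mathbb{B}})$ the flat metric has totally geodesic sphere boundary wait—not totally geodesic; rather it is the scalar-flat constant-mean-curvature compactification; here $\lambda_1(\D_{\mathfrak{g}_\mathbb{S}}) = n/2$ on the round sphere $\mathbb{S}^n$, while $\lambda_1(\sqrt{P_{\mathfrak{g}_\mathbb{S}}}) = (n-1)/2$ (as computed in the proof of Theorem \ref{YamabeSteklov}), so $\tfrac{n}{n-1}\cdot\tfrac{n-1}{2} = \tfrac{n}{2}$ and equality indeed holds.

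I expect the main obstacle to be step (i)–(ii): extracting from the equality case of the Hijazi--Montiel--Zhang argument precisely the statement that equality propagates back to (\ref{EigenEstimate1}). The subtlety is that Hijazi--Montiel--Zhang's equality case is usually phrased in terms of the existence of a special spinor, not directly in terms of $\lambda_1(\sqrt{P_{\hatgf}})$; one must check that the conformal metric produced is admissible for Theorem \ref{MainRes} (scalar-flat, totally umbilical boundary — automatic here since all compactifications of a Poincar\'e--Einstein manifold have totally umbilical boundary, as established in Section \ref{IntegralInequality}) and that the eigenspinor's restriction is genuinely an eigenfunction at the top of the spectrum, so that the Rayleigh-quotient step in the proof of Theorem \ref{YamabeSteklov} is saturated. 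Once this is in place, the rigidity is immediate from the earlier results, and no new Obata-type computation is needed. A secondary (minor) technical point is the conformal covariance of both eigenvalues under $\hatg \mapsto \tilde g$, which lets us assume without loss of generality that the compactification is the scalar-flat one; this is standard for $\sqrt{P_{\hatgf}}$ (it is conformally covariant of the right weight by construction via scattering) and for $\D_{\hatgf}$ (the classical conformal covariance of the Dirac operator on the boundary), so it should be dispatched in a sentence.
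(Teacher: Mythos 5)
Your treatment of the inequality itself and of the converse direction matches the paper: the estimate $\lambda_1(\D_{\hatgf})\geq\tfrac{n}{n-1}\lambda_1(\sqrt{P_{\hatgf}})$ is quoted from Hijazi--Montiel--Zhang (the paper simply cites \cite[Theorem 9]{HijaziMontielZhang2}, modulo normalization), and the verification on $(\mathbb{B}^{n+1},g_{\mathbb{B}})$ via $\lambda_1(\D_{\mathfrak{g}_{\mathbb{S}}})=n/2$, $\lambda_1(\sqrt{P_{\mathfrak{g}_{\mathbb{S}}}})=(n-1)/2$ is exactly the paper's. (Your aside about reducing to one conformal representative is unnecessary and in fact not valid as stated, since neither eigenvalue is individually a conformal invariant; but since HMZ holds for every metric this does no harm.)

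The genuine gap is in your step (ii). You argue that equality in (\ref{HijaziMontielZhang}) forces equality in (\ref{EigenEstimate1}) because the chain
$\lambda_1(\D_{\hatgf})^2\geq\tfrac{n^2}{(n-1)^2}\lambda_1(\sqrt{P_{\hatgf}})^2\geq\tfrac{n}{n-2}\lambda_1(P_{\hatgf})$
``collapses only if each link is an equality.'' That is backwards: equality in the \emph{first} link says nothing about the second link unless you already know the two endpoints coincide, i.e.\ unless the closed Hijazi inequality on $\partial X$ is itself saturated --- which is not part of your hypothesis. (That collapsing argument is what the paper uses later, in Theorem \ref{HM-Twistor}, where the twistor-spinor hypothesis pins down \emph{both} endpoints.) The correct and much shorter route --- the one the paper takes --- is already latent in your step (i): the HMZ rigidity statement produces a Ricci-flat metric in the conformal class $[\ovg]$ (your parallel spinor on the scalar-flat compactification forces exactly this), and then Corollary \ref{RicciFlat-PE} immediately identifies $(X,g_+)$ with hyperbolic space. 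So you should delete the chain argument of step (ii), record that a parallel spinor makes $(\overline{X},\tilde g)$ Ricci-flat, and invoke Corollary \ref{RicciFlat-PE} rather than the rigidity of Theorem \ref{YamabeSteklov}.
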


\begin{proof}
The inequality is exactly the one obtained in \cite[Theorem 9]{HijaziMontielZhang2} (the constant is different because of our choice of normalization). Moreover, if the equality occurs in (\ref{HijaziMontielZhang}) it is proved in the same result that the manifold $(\overline{X},\hatg)$ has a Ricci-flat metric in its conformal class. In this situation, Corollary \ref{RicciFlat-PE} applies and the conclusion follows immediately. Conversely, it is well-known that $\lambda_1(\D_{\mathfrak{g}_{\mathbb{S}}})=n/2$ on the unit round sphere as well as $\lambda_{1}(\sqrt{P_{\mathfrak{g}_{\mathbb{S}}}})=(n-1)/2$, so that equality occurs in (\ref{HijaziMontielZhang}) for the unit Euclidean ball. In this situation, the associated Poincar\'e-Einstein is the hyperbolic space.  
\end{proof}

\begin{remarksub}
In \cite{ChenWangZhang}, the authors note that the main inequality in \cite{HijaziMontielZhang2} was proved for manifolds with boundary whose boundary is an internal hypersurface. This is due to the fact that it relies on the unique continuation property which is well-known to hold in this situation. However, from \cite[Section 1.2]{BoossBavnbekLesch} this property also hold if we assume that the hypersurface is the ``true'' boundary of the manifold so that Inequality (\ref{HijaziMontielZhang}) holds in our situation (and in fact for all compact manifolds with boundary as soon as the other assumptions in \cite[Theorem 9]{HijaziMontielZhang2} are fulfilled). 
\end{remarksub}

Now if we assume that the Yamabe invariant $\mathcal{Y}(\partial X,[\ovgf])$ of the boundary at infinity is non negative, which implies that $\mathcal{E}(\overline{X},[\ovg])\geq 0$ as discussed in Section \ref{HalfConfLaplacian}, we can apply for $\widehat{g}\in[\ovg]$ the inequalities (\ref{EigenEstimate1}) and (\ref{EigenEstimate2}) to get 
\begin{eqnarray}\label{HijaziClosed}
\lambda_1(\D_{\widehat{\mathfrak{g}}})^2\geq\frac{n}{n-2}\lambda_{1}(P_{\widehat{\mathfrak{g}}})
\end{eqnarray}
for $n\geq 3$ and 
\begin{eqnarray}\label{BarClosed}
\lambda_1(\D_{\widehat{\mathfrak{g}}})^2\geq\frac{2\pi}{{\rm Area\,}(\partial X,\widehat{\mathfrak{g}})}\chi(\partial X)
\end{eqnarray}
for $n=2$. Moreover equality occurs if and only if $(\partial X,\widehat{\mathfrak{g}})$ carries a real Killing spinor. These inequalities are in fact well-known in the context of closed spin manifolds, the first one due to Hijazi \cite{Hijazi1} and known as the Hijazi inequality and the second one due to B\"ar \cite{Bar1} and Hijazi \cite{Hijazi2}. Note that Inequality (\ref{HijaziClosed}) differs from the original one from a constant since we take an other normalization in the definition of the conformal Laplacian $P_{\widehat{\mathfrak{g}}}$. Recall also that manifolds carrying real Killing spinors are Einstein and that, in the simply connected case, they have been classified by B\"ar \cite{Bar2}. They are round spheres, Einstein-Sasaki manifolds, $3$-Sasaki manifolds, nearly-K\"ahler non-K\"ahler $6$-manifolds and $7$-manifolds carrying nice $3$-forms. The novelty in our approach is that when this type of manifolds arises as the boundary of a spin conformal compactification of a Poincar\'e-Einstein manifold then the inequality (\ref{EigenEstimate1}) (resp. (\ref{EigenEstimate2})) is always sharper than the inequality (\ref{HijaziClosed}) (resp. (\ref{BarClosed})). In particular, assume that $(X,g_+)$ is a Poincar\'e-Einstein manifold whose boundary at infinity carries a real Killing spinor that is there exists a metric $\hatgf\in[\ovgf]$ for which such a spinor field exists. Then it follows that equality occurs in (\ref{HijaziClosed}) and, from Theorem \ref{YamabeSteklov}, equality also holds in (\ref{EigenEstimate1}). This implies that $(X,g_+)$ has to be the hyperbolic space and $(\partial X,[\ovgf])$ is the M\"obius sphere. We thus have proved that the only spin Poincar\'e-Einstein manifold with a real Killing spinor in its conformal infinity is the hyperbolic space. In particular, non spherical manifolds with a real Killing spinor cannot be the conformal infinity of a spin Poincar\'e-Einstein manifold. In fact, with a little more effort, we can say more. Indeed assume that the boundary at infinity carries a twistor-spinor $\varphi\in\Gamma(\Sigma\partial X)$ that is a section of the spinor bundle over $(\partial X,\hatgf)$ such that
\begin{eqnarray*}
\widehat{\nabla}^{\partial X}_Y\varphi+\frac{1}{n}\widehat{\gamma}^{\partial X}(Y)\widehat{\D}\varphi=0
\end{eqnarray*}
for all $Y\in\Gamma(T\partial X)$. Note that this property only depends on the conformal structure of the underlying manifold so that it is a natural assumption to impose on the boundary at infinity. It can be seen as a supersymmetric version of the symmetric condition which assume the existence of a conformal Killing vector on $(\partial X,[\ovgf])$. Moreover, if a twistor spinor is also an eigenspinor for the Dirac operator it is a Killing spinor. In particular, this situation contains the previous case where real Killing spinors were considered. For more details on the subject, we refer to \cite{BaumFriedrichGrunewaldKath} for example. Let us now state the result which follows directly from our reasoning. As pointed out in the introduction, this result was allready known and proved under weaker assumptions by Hijazi and Montiel \cite{HijaziMontiel3} but it seems interesting to the author to give an alternative proof.
\begin{theoremsub}\label{HM-Twistor}
A spin Poincar\'e-Einstein manifold $(X,g_+)$ whose conformal infinity $(\partial X,[\ovgf])$ has a smooth representative and which carries a twistor spinor is isometric to the hyperbolic space $(\mathbb{H}^{n+1},g_{\mathbb{H}})$. 
\end{theoremsub}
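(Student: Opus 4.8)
Following the discussion preceding the statement and the Introduction, the twistor spinor is understood to live on the conformal infinity $(\partial X,[\ovgf])$; since carrying a twistor spinor is a conformally invariant condition, this only constrains the conformal class. The plan is to produce, from such a twistor spinor, a representative $\hatgf\in[\ovgf]$ for which one of the inequalities already established in this paper becomes an equality, and then to invoke the corresponding rigidity statement. Fix a representative $\hatgf_0\in[\ovgf]$ and a twistor spinor $\varphi\not\equiv 0$ on $(\partial X,\hatgf_0)$. Two preliminary remarks: the eigenvalues $\lambda_1(\sqrt{P_{\hatgf}})$, $\lambda_1(\D_{\hatgf})$, $\lambda_1(P_{\hatgf})$ below depend only on the boundary metric $\hatgf$; and every smooth $\hatgf\in[\ovgf]$ is the restriction to $\partial X$ of a smooth compactification $\rho^2g_+$ of $(X,g_+)$ (multiply a fixed smooth defining function by the exponential of a suitable smooth function), so Theorems \ref{MainRes}, \ref{YamabeSteklov} and \ref{HMZ} apply verbatim to such $\hatgf$.

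The first step is to use the structure theory of twistor spinors on \emph{compact} spin manifolds (Lichnerowicz; see \cite{BaumFriedrichGrunewaldKath}). Assume $\partial X$ connected (otherwise restrict to a component where $\varphi\not\equiv 0$). Then exactly one of the following holds: (a) $\varphi$ has a zero, and $(\partial X,[\ovgf])$ is conformally diffeomorphic to the M\"obius sphere $(\mathbb{S}^n,[\mathfrak{g}_{\mathbb{S}}])$; (b) $\varphi$ is zero-free and there is $\hatgf\in[\ovgf]$ for which $\varphi$ is a real Killing spinor with nonzero Killing number $\lambda$, so $(\partial X,\hatgf)$ is Einstein with constant $\Ss_{\hatgf}=4n(n-1)\lambda^2>0$; or (c) $\varphi$ is zero-free and there is $\hatgf\in[\ovgf]$ for which $\varphi$ is parallel, so $(\partial X,\hatgf)$ is Ricci-flat and $\mathcal{Y}(\partial X,[\ovgf])=0$. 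In all cases $\mathcal{Y}(\partial X,[\ovgf])\geq 0$, hence (Section \ref{HalfConfLaplacian}) $\lambda_1(\sqrt{P_{\hatgf}})\geq 0$ and the hypotheses of Theorems \ref{YamabeSteklov} and \ref{HMZ} hold; when $n=2$, case (b) forces $(\partial X,\hatgf)$ to be the round $\mathbb{S}^2$, reducing to (a), and (c) is the flat case $\chi(\partial X)=0$.

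In case (a), the hyperbolic space is the only Poincar\'e-Einstein manifold with M\"obius sphere boundary at infinity (the uniqueness recalled just after Corollary \ref{main}), so $(X,g_+)\cong(\mathbb{H}^{n+1},g_{\mathbb{H}})$. In cases (b) and (c) with $n\geq 3$, take the representative $\hatgf$ above; then $\widehat{\D}\varphi=-n\lambda\varphi$ and, since $\Ss_{\hatgf}=4n(n-1)\lambda^2$ is constant, Friedrich's inequality forces $\lambda_1(\D_{\hatgf})^2=n^2\lambda^2$, while $P_{\hatgf}=-\widehat{\LSs}+n(n-2)\lambda^2$ gives $\lambda_1(P_{\hatgf})=n(n-2)\lambda^2$. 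Combining Theorem \ref{HMZ} with inequality (\ref{EigenEstimate1}) of Theorem \ref{YamabeSteklov}, squaring the former (legitimate since $\lambda_1(\sqrt{P_{\hatgf}})\geq 0$), yields
\[
n^2\lambda^2=\lambda_1(\D_{\hatgf})^2\geq\frac{n^2}{(n-1)^2}\lambda_1(\sqrt{P_{\hatgf}})^2\geq\frac{n}{n-2}\lambda_1(P_{\hatgf})=n^2\lambda^2,
\]
so both inequalities are equalities; in particular equality holds in (\ref{HijaziMontielZhang}) for $\hatgf$, and the rigidity part of Theorem \ref{HMZ} (or of Theorem \ref{YamabeSteklov}) gives $(X,g_+)\cong(\mathbb{H}^{n+1},g_{\mathbb{H}})$. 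For $n=2$, case (a) is as above, and in case (c) one has $\lambda_1(\D_{\hatgf})=0$ (parallel spinor), so $0=\lambda_1(\D_{\hatgf})\geq\frac{n}{n-1}\lambda_1(\sqrt{P_{\hatgf}})\geq 0$ by Theorem \ref{HMZ}, whence $\lambda_1(\sqrt{P_{\hatgf}})=0$, equality in (\ref{HijaziMontielZhang}), and Theorem \ref{HMZ} again gives $(X,g_+)\cong(\mathbb{H}^{n+1},g_{\mathbb{H}})$.

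The main obstacle is the first step: one must quote the compact twistor-spinor dichotomy accurately, handle a possibly disconnected $\partial X$ (tracking the scattering-defined eigenvalues across components), and verify that the chosen $\hatgf$ is realized by an honest smooth compactification so that Theorems \ref{HMZ}, \ref{YamabeSteklov}, \ref{MainRes} apply as stated; the case $n=2$ also needs the short separate treatment above. Everything after that is the bookkeeping displayed; one notes a posteriori that cases (b) with $\lambda\neq 0$ non-spherical and (c) are vacuous, consistently with the M\"obius sphere being neither a non-round positive Einstein metric nor Ricci-flat.
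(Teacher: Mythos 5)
Your argument is correct in substance, but it takes a genuinely different route from the paper. The paper's proof is shorter and avoids the structure theory of twistor spinors altogether: it picks a Yamabe representative $\hatgf\in[\ovgf]$ of constant scalar curvature, uses the conformal invariance of the twistor equation together with the Lichnerowicz identity $\widehat{\D}^2\varphi=\tfrac{n\Ss_{\hatgf}}{4(n-1)}\varphi$ to conclude both that $\Ss_{\hatgf}\geq 0$ (hence $\mathcal{Y}(\partial X,[\ovgf])\geq 0$) and that $\lambda_1(\D_{\hatgf})^2\leq\tfrac{n\Ss_{\hatgf}}{4(n-1)}$, and then closes exactly the sandwich you use — Theorem \ref{HMZ}, inequality (\ref{EigenEstimate1}), and the Rayleigh bound $\lambda_1(P_{\hatgf})\geq\tfrac{n-2}{4(n-1)}\Ss_{\hatgf}$ — to force equality in (\ref{EigenEstimate1}) and invoke the rigidity of Theorem \ref{YamabeSteklov}; no case analysis and no appeal to zeros of the spinor is needed. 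Your proof instead runs through the compact twistor-spinor trichotomy, the M\"obius-sphere uniqueness recalled after Corollary \ref{main}, and Friedrich's equality case; this makes the rigidity mechanism and the a posteriori vacuousness of the non-spherical cases more visible, at the price of importing more external spin-geometric input and the extra $n=2$ and disconnected-boundary bookkeeping.

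One caveat on the step you yourself flag: as stated, cases (b) and (c) are stronger than the standard theorem. A zero-free twistor spinor need not itself become a Killing spinor after a conformal change; what is true (Friedrich, Lichnerowicz; see \cite{BaumFriedrichGrunewaldKath}) is that for a constant scalar curvature representative one has $\Ss_{\hatgf}\geq 0$, and if $\Ss_{\hatgf}>0$ the twistor spinor splits as a sum of two real Killing spinors with Killing numbers $\pm\tfrac12\sqrt{\Ss_{\hatgf}/n(n-1)}$, while if $\Ss_{\hatgf}=0$ it is parallel. This weaker statement is all your argument actually uses (you only need \emph{some} real Killing or parallel spinor for \emph{some} representative, to get $\lambda_1(\D_{\hatgf})^2\leq n^2\lambda^2$ and $\lambda_1(P_{\hatgf})=n(n-2)\lambda^2$), so the gap is reparable by quoting the decomposition theorem correctly; also ``exactly one of the following holds'' is inaccurate (the round sphere carries zero-free Killing spinors), though harmless. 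Once stated this way the zero/no-zero dichotomy becomes unnecessary, which is precisely the simplification the paper exploits.
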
  

\begin{proof}
From the solution of the Yamabe problem \cite{Schoen}, we can choose a metric $\hatgf\in[\ovgf]$ with constant scalar curvature $\Ss_{\hatgf}$ and whose sign is precisely given by the sign of its Yamabe invariant $\mathcal{Y}(\partial X,[\ovgf])$. Moreover, from the variational characterization of the first eigenvalue of the conformal Laplacian we deduce that 
\begin{eqnarray}\label{CL-Estimate}
\lambda_{1}(P_{\hatgf})=\inf_{f\in C^\infty(\partial X)}\frac{\int_{\partial X}\Big(|\widehat{\nbs} f|^2+\frac{n-2}{4(n-1)}\Ss_{\hatgf} f^2\Big)\,ds_{\hatgf}}{\int_{\partial X}f^2\,ds_{\hatgf}}\geq \frac{n-2}{4(n-1)}\Ss_{\hatgf}.
\end{eqnarray}
On the other hand, there exists on the spinor bundle over $\partial X$ endowed with the metric $\hatgf$ a twistor spinor and it follows from \cite{Lichnerowicz2} that it is an eigenspinor for the squared of the Dirac operator $\widehat{\D}$ associated to the eigenvalue $\frac{n\Ss_{\hatgf}}{4(n-1)}$. Note that since $\partial X$ is compact and the operator $\widehat{\D}^2$ is non negative, the scalar curvature with respect to the metric $\hatgf$ is a non negative constant so that $\mathcal{Y}(\partial X,[\ovgf])\geq 0$. For $n\geq 3$, we can then apply (\ref{EigenEstimate1}) in Theorem \ref{YamabeSteklov}, the Hijazi inequality (\ref{HijaziClosed}) and (\ref{CL-Estimate}) to get
\begin{eqnarray*}
\frac{n\Ss_{\hatgf}}{4(n-1)}\geq\lambda_1(\D_{\hatgf})^2\geq\frac{n^2}{(n-1)^2}\lambda_{1}(\sqrt{P_{\hatgf}})^2 \geq\frac{n}{n-2}\lambda_{1}(P_{\hatgf})\geq \frac{n\Ss_{\hatgf}}{4(n-1)}
\end{eqnarray*}
so that the equality holds in (\ref{EigenEstimate1}) and $(X,g_+)$ is isometric to the hyperbolic space as claimed. The same arguments hold for $n=2$ using (\ref{EigenEstimate2}) in Theorem \ref{YamabeSteklov}. 
\end{proof}
This result also prevents the existence of spin Poincar\'e-Einstein manifold whose boundary at infinity carries a parallel spinor field. These manifolds include Calabi-Yau manifolds, hyper-K\"ahler manifolds, $G_2$ $7$-manifolds, ${\rm Spin}_7$ $8$-manifolds in the simply connected case \cite{WangMK} as well as the flat tori equipped with the trivial spin structure.  

To conclude this section, we combine the estimate of Hijazi, Montiel and Zhang and the upper bound (\ref{ReVol-2}) to obtain a lower bound of the first non negative eigenvalue of the boundary Dirac operator in term of the renormalized volume of $(X,g_+)$. We first need to remark that from the variational characterizations of the eigenvalue $\lambda_{1}(\sqrt{P_{\hatgf}})$, the conformal invariant $\mathcal{E}(\overline{X},[\ovg])$ and the H\"older inequality, we immediately deduce \cite[Corollary 11]{HijaziMontielZhang2} that
\begin{eqnarray*}
\lambda_1(\D_{\widehat{\mathfrak{g}}}){\rm Vol}(\partial X,\widehat{\mathfrak{g}})^{\frac{1}{n}}\geq\frac{n}{2}\mathcal{E}(\overline{X},[\ovg])
\end{eqnarray*}  
for any $\hatg\in[\ovg]$. Then it is straightforward to deduce from Theorem \ref{RV4} that in the four dimensional case we get:
\begin{theoremsub}\label{ReVol-BDO1}
Let $(X,g_+)$ be a four dimensional spin Poincar\'e-Einstein manifold such that the boundary at infinity has a smooth representative and non negative Yamabe invariant. Then the first non negative eigenvalue $\lambda_1(\D_{\hatgf})$ of the boundary Dirac operator satisfies
\begin{eqnarray}\label{ReVol-BDO}
\lambda_1(\D_{\widehat{\mathfrak{g}}})^3{\rm Vol}(\partial X,\widehat{\mathfrak{g}})\geq\frac{81}{16}V(X,g_+)
\end{eqnarray}
for any $\hatg\in[\ovg]$. Moreover, there exists a compactification of $(X,g_+)$ for which (\ref{ReVol-BDO}) is an equality if and only if $(X,g_+)$ is isometric to the hyperbolic space $(\mathbb{H}^{n+1},g_{\mathbb{H}})$. 
\end{theoremsub}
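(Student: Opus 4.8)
The plan is to chain together the two estimates already at hand. The first is the inequality $\lambda_1(\D_{\widehat{\mathfrak{g}}})\,{\rm Vol}(\partial X,\widehat{\mathfrak{g}})^{\frac{1}{n}}\geq\frac{n}{2}\mathcal{E}(\overline{X},[\ovg])$ recorded just above from \cite[Corollary 11]{HijaziMontielZhang2}; since $\dim X=4$ we have $n=3$, so it reads $\lambda_1(\D_{\widehat{\mathfrak{g}}})\,{\rm Vol}(\partial X,\widehat{\mathfrak{g}})^{\frac{1}{3}}\geq\frac{3}{2}\mathcal{E}(\overline{X},[\ovg])$. The second is Theorem \ref{RV4}, namely $V(X,g_+)\leq\frac{2}{3}\mathcal{E}(\overline{X},[\ovg])^3$, which is applicable precisely because $\mathcal{Y}(\partial X,[\ovgf])\geq 0$ is assumed.

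First I would observe that $\mathcal{Y}(\partial X,[\ovgf])\geq 0$ forces $\mathcal{E}(\overline{X},[\ovg])\geq 0$ (as recalled in Section \ref{HalfConfLaplacian}), so that both sides of the Dirac estimate are non negative and may be cubed:
\[
\lambda_1(\D_{\widehat{\mathfrak{g}}})^3\,{\rm Vol}(\partial X,\widehat{\mathfrak{g}})\geq\frac{27}{8}\,\mathcal{E}(\overline{X},[\ovg])^3.
\]
Inserting $\mathcal{E}(\overline{X},[\ovg])^3\geq\frac{3}{2}V(X,g_+)$ from Theorem \ref{RV4} then yields
\[
\lambda_1(\D_{\widehat{\mathfrak{g}}})^3\,{\rm Vol}(\partial X,\widehat{\mathfrak{g}})\geq\frac{27}{8}\cdot\frac{3}{2}\,V(X,g_+)=\frac{81}{16}\,V(X,g_+),
\]
which is (\ref{ReVol-BDO}). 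The left-hand side is invariant under conformal rescaling within $[\ovg]$ — the scaling of $\lambda_1(\D_{\widehat{\mathfrak{g}}})$ being exactly compensated by that of ${\rm Vol}(\partial X,\widehat{\mathfrak{g}})$ — while the right-hand side does not involve $\hatg$ at all, so the inequality holds for every $\hatg\in[\ovg]$.

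For the rigidity statement, suppose (\ref{ReVol-BDO}) is an equality for some compactification. Then every inequality in the chain above must be an equality; in particular $\frac{27}{8}\mathcal{E}(\overline{X},[\ovg])^3=\frac{81}{16}V(X,g_+)$, i.e. $V(X,g_+)=\frac{2}{3}\mathcal{E}(\overline{X},[\ovg])^3$, and the rigidity part of Theorem \ref{RV4} forces $(X,g_+)$ to be isometric to the hyperbolic space. Conversely, take the unit Euclidean ball $(\mathbb{B}^4,g_{\mathbb{B}})$ as a compactification of $(\mathbb{H}^4,g_{\mathbb{H}})$: on $\mathbb{S}^3$ one has $\lambda_1(\D_{\mathfrak{g}_{\mathbb{S}}})=\frac{3}{2}$ and ${\rm Vol}(\mathbb{S}^3,\mathfrak{g}_{\mathbb{S}})=2\pi^2$, so the left-hand side of (\ref{ReVol-BDO}) equals $\frac{27}{8}\cdot 2\pi^2=\frac{27\pi^2}{4}$, while by (\ref{RenVolHyperbolic}) the right-hand side equals $\frac{81}{16}\cdot\frac{4\pi^2}{3}=\frac{27\pi^2}{4}$; equality indeed holds.

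There is essentially no obstacle beyond bookkeeping: the substantive content lies in Theorem \ref{RV4} (itself resting on the integral inequality (\ref{MainIneBis})) and in \cite{HijaziMontielZhang2}. The only points demanding a little care are checking that equality in the composite estimate does propagate back to equality in the renormalized-volume bound — so that the rigidity of Theorem \ref{RV4} may legitimately be invoked — and confirming the numerical constants in the round-sphere model.
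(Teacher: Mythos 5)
Your proof is correct and follows exactly the route the paper intends: cube the Hijazi--Montiel--Zhang estimate $\lambda_1(\D_{\widehat{\mathfrak{g}}})\,{\rm Vol}(\partial X,\widehat{\mathfrak{g}})^{1/3}\geq\tfrac{3}{2}\mathcal{E}(\overline{X},[\ovg])$ (legitimate since $\mathcal{E}\geq 0$) and insert Theorem \ref{RV4}, with rigidity traced back to the equality case of Theorem \ref{RV4} and the constants checked on the unit ball. The paper leaves this as ``straightforward to deduce''; your write-up supplies precisely the intended bookkeeping.
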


It is maybe interesting to note that this bound on the renormalized volume only depends on the geometry of the boundary at infinity while the renormalized volume depends, a priori, on the global geometry of the bulk manifold $(X,g_+)$. This inequality suggests the definition of the following spin conformal invariant 
\begin{eqnarray*}
\mathcal{\D}(\overline{X},[\ovg]):=\inf_{\hatg\in[\ovg]}\Big(\lambda_1(\D_{\widehat{\mathfrak{g}}}){\rm Vol}(\partial X,\widehat{\mathfrak{g}})^\frac{1}{n}\Big),
\end{eqnarray*}
where $(\overline{X},[\overline{g}])$ is a $(n+1)$-dimensional conformal spin compact manifold with boundary. It can be seen as an extrinsic analogue of the invariant studied by Ammann \cite{Ammann1} (see also the references therein) which is tightly related to the existence of periodic constant mean curvature immersions of surfaces \cite{Ammann2}. We leave the study of this invariant for further investigations.

%%%%%%%%%%%%%%%%%%%%%%%%%%%%%%%%%%%%%%%%%%%%%%%%%%%%%%%%%%%%%%%%%%%%%%%%%%%%%%%%%%%%%%%%%%%%%%%%%%%%%%%%%

\subsection{The condition associated to a chirality operator}\label{BoundaryCHI}

%%%%%%%%%%%%%%%%%%%%%%%%%%%%%%%%%%%%%%%%%%%%%%%%%%%%%%%%%%%%%%%%%%%%%%%%%%%%%%%%%%%%%%%%%%%%%%%%%%%%%%%%%

In this section, we assume that there exists a {\it chirality operator} $\mathcal{G}$ on $\overline{X}$ that is an unitary and parallel involution of the spinor bundle $\Sigma\overline{X}$ which anticommutes with the action of a tangent vector field on $\overline{X}$. Note that such a map always exists when $n$ is odd. It is then well-known \cite{HijaziMontielRoldan1} that the projection $P_{\mathcal{G}}:=\frac{1}{2}\big({\rm Id}-\widehat{\gamma}(\widehat{N})\mathcal{G}\big)$ defines a local elliptic boundary condition for the Dirac operator for which the spectrum is a discrete unbounded sequence of real eigenvalues with finite dimensional eigenspaces. If $\lambda_1(D_{\hatg})$ denotes this first positive eigenvalue, it is proved in \cite{Raulot2} by the author that 
\begin{eqnarray*}
\lambda_1(D_{\hatg})^2{\rm Vol}(\overline{X},\hatg)^{\frac{2}{n+1}}\geq\frac{n+1}{4n}\widetilde{\mathcal{E}}(\overline{X},[\ovg])
\end{eqnarray*} 
and so we can combine this estimate with Theorem \ref{1-ReVol} to deduce:
\begin{theoremsub}\label{Chi-ReVol}
Let $(X,g_+)$ be a four dimensional spin Poincar\'e-Einstein manifold which has a smooth representative in its conformal infinity. Then the first eigenvalue $\lambda_1(D_{\hatg})$ of the Dirac operator under the condition associated to a chirality operator satisfies
\begin{eqnarray}\label{ReVol-Chi}
\lambda_1(D_{\hatg})^4{\rm Vol}(\overline{X},\hatg)^{2}\geq 16 V(X,g_+)
\end{eqnarray}
for any $\hatg\in[\ovg]$. Moreover, there exists a compactification of $(X,g_+)$ for which (\ref{ReVol-Chi}) is an equality if and only if $(X,g_+)$ is isometric to the hyperbolic space $(\mathbb{H}^{n+1},g_{\mathbb{H}})$.
\end{theoremsub}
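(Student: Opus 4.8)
The plan is to obtain (\ref{ReVol-Chi}) by chaining two inequalities already recorded in the text: the eigenvalue estimate of \cite{Raulot2} for the Dirac operator under the chirality boundary condition, namely $\lambda_1(D_{\hatg})^2\,{\rm Vol}(\overline{X},\hatg)^{\frac{2}{n+1}}\geq\frac{n+1}{4n}\,\widetilde{\mathcal{E}}(\overline{X},[\ovg])$, and the upper bound $V(X,g_+)\leq\frac{1}{144}\,\widetilde{\mathcal{E}}(\overline{X},[\ovg])^2$ for the renormalized volume from Theorem \ref{1-ReVol}. Since $(X,g_+)$ is four dimensional we have $n=3$, so a chirality operator on $\overline{X}$ always exists and the first estimate becomes $\lambda_1(D_{\hatg})^2\,{\rm Vol}(\overline{X},\hatg)^{1/2}\geq\frac{1}{3}\,\widetilde{\mathcal{E}}(\overline{X},[\ovg])$ for every $\hatg\in[\ovg]$.

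I would then distinguish two cases according to the sign of $\widetilde{\mathcal{E}}(\overline{X},[\ovg])$. If $\widetilde{\mathcal{E}}(\overline{X},[\ovg])<0$, then $V(X,g_+)<0$ --- this is obtained along the lines of the Corollary following Theorem \ref{1-ReVol}, by choosing $\hatg\in[\ovg]_{{\rm mf}}$ with $R_{\hatg}\leq 0$ a constant and estimating the total $Q$-curvature --- so the inequality (\ref{ReVol-Chi}) holds trivially, its left-hand side being non-negative. If $\widetilde{\mathcal{E}}(\overline{X},[\ovg])\geq 0$, then both sides of the specialized estimate of \cite{Raulot2} are non-negative and may be squared, which gives $\lambda_1(D_{\hatg})^4\,{\rm Vol}(\overline{X},\hatg)\geq\frac{1}{9}\,\widetilde{\mathcal{E}}(\overline{X},[\ovg])^2$; combining this with the bound $\widetilde{\mathcal{E}}(\overline{X},[\ovg])^2\geq 144\,V(X,g_+)$ of Theorem \ref{1-ReVol} yields $\lambda_1(D_{\hatg})^4\,{\rm Vol}(\overline{X},\hatg)\geq 16\,V(X,g_+)$, which is (\ref{ReVol-Chi}).

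For the rigidity statement, suppose equality holds in (\ref{ReVol-Chi}) for some compactification. Then the two inequalities of the previous paragraph are squeezed to equalities, hence in particular equality holds in the bound $V(X,g_+)\leq\frac{1}{144}\widetilde{\mathcal{E}}(\overline{X},[\ovg])^2$ of Theorem \ref{1-ReVol}, so that its rigidity part forces $(X,g_+)$ to be isometric to $(\mathbb{H}^4,g_{\mathbb{H}})$. Conversely, on $(\mathbb{H}^4,g_{\mathbb{H}})$ I would take the round hemisphere $(\mathbb{S}^4_+,g_{\mathbb{S}})$ as a compactification: there $V(\mathbb{H}^4,g_{\mathbb{H}})=\frac{4\pi^2}{3}$ by (\ref{RenVolHyperbolic}), while ${\rm Vol}(\mathbb{S}^4_+,g_{\mathbb{S}})=\frac{4\pi^2}{3}$ and the first eigenvalue of the Dirac operator under the chirality boundary condition on $\mathbb{S}^4_+$ equals $2$, so that $\lambda_1(D_{g_{\mathbb{S}}})^4\,{\rm Vol}(\mathbb{S}^4_+,g_{\mathbb{S}})=16\cdot\frac{4\pi^2}{3}=16\,V(\mathbb{H}^4,g_{\mathbb{H}})$; this is consistent with equality being achieved simultaneously in the estimate of \cite{Raulot2} and in Theorem \ref{1-ReVol} for this example.

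The only step that is not pure bookkeeping is the dichotomy on the sign of $\widetilde{\mathcal{E}}(\overline{X},[\ovg])$: the two estimates combine losslessly precisely when this conformal invariant is non-negative, and one has to observe separately that a negative value of it already forces $V(X,g_+)<0$. I expect this to be the main (and essentially the only) obstacle; once it is settled, the non-negative case is a routine manipulation of universal constants and the rigidity reduces entirely to that of Theorem \ref{1-ReVol}.
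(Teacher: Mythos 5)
Your chaining of the estimate from \cite{Raulot2} with Theorem \ref{1-ReVol} is exactly the paper's proof, which consists of the single sentence that one ``can combine this estimate with Theorem \ref{1-ReVol} to deduce'' the result; your hemisphere computation for the converse and the reduction of the rigidity statement to that of Theorem \ref{1-ReVol} also match what is intended. The one genuinely delicate point is the one you isolated: squaring $\lambda_1(D_{\hatg})^2{\rm Vol}(\overline{X},\hatg)^{1/2}\geq\frac{1}{3}\widetilde{\mathcal{E}}(\overline{X},[\ovg])$ is only legitimate when $\widetilde{\mathcal{E}}(\overline{X},[\ovg])\geq 0$, a point the paper passes over in silence. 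However, your disposal of the case $\widetilde{\mathcal{E}}(\overline{X},[\ovg])<0$ is not justified as written: the corollary you invoke treats only $\widetilde{\mathcal{E}}(\overline{X},[\ovg])=0$, and the $Q$-curvature computation for a minimizer with constant negative scalar curvature and vanishing mean curvature still yields only $V(X,g_+)\leq\frac{1}{144}\widetilde{\mathcal{E}}(\overline{X},[\ovg])^2$, whose right-hand side is \emph{positive}; it does not give $V(X,g_+)<0$. So the negative case needs a real argument (or an added hypothesis such as $\widetilde{\mathcal{E}}(\overline{X},[\ovg])\geq 0$, which would parallel the hypothesis $\mathcal{Y}(\partial X,[\ovgf])\geq 0$ in Theorem \ref{RV4}); this gap is present in the paper's proof as well, but your proposed patch does not close it.

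A second, smaller discrepancy: your computation produces $\lambda_1(D_{\hatg})^4{\rm Vol}(\overline{X},\hatg)\geq 16\,V(X,g_+)$, with exponent one on the volume, whereas (\ref{ReVol-Chi}) displays ${\rm Vol}(\overline{X},\hatg)^2$. The exponent-one version is the scale-invariant one, and it is the only one consistent with your hemisphere check and with the paper's final corollary, where $\mathcal{D}(\overline{X},[\overline{g}])^4=\inf_{\hatg}\lambda_1(D_{\hatg})^4{\rm Vol}(\overline{X},\hatg)$ appears; the displayed power $2$ is evidently a misprint in the statement. You should flag this explicitly rather than assert that the inequality you derived ``is'' (\ref{ReVol-Chi}).
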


Finally, it is interesting to remark that one can formulate all these estimates in a more conformal way, meaning that all the data involved depend only on the conformal structure of the compactification of the Poincar\'e-Einstein manifold unlike the left-hand side of Inequality (\ref{ReVol-Chi}). This formulation needs the introduction of a spin conformal invariant defined by the author in \cite{Raulot4} whose expression for four dimensional manifolds is
\begin{eqnarray*}
\mathcal{D}(\overline{X},[\overline{g}]):=\inf_{\widehat{g}\in[\overline{g}]}\Big(\lambda_1(D_{\widehat{g}}){\rm Vol}(\overline{X},\widehat{g})^\frac{1}{4}\Big).
\end{eqnarray*}
This finally leads to:
\begin{corollarysub}
Under the assumptions of Theorem \ref{Chi-ReVol}, it holds that:
\begin{eqnarray*}
16 V(X,g_+)\leq 9\widetilde{\mathcal{E}}(\overline{X},[\ovg])^2\leq\mathcal{D}(\overline{X},[\overline{g}])^4\leq \frac{64\pi^2}{3}.
\end{eqnarray*}
\end{corollarysub}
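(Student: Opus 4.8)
The plan is to chain together three inequalities that are already available in the excerpt, read off in the right order. The final statement asserts the string of inequalities $16\,V(X,g_+)\leq 9\,\widetilde{\mathcal{E}}(\overline{X},[\ovg])^2\leq\mathcal{D}(\overline{X},[\overline{g}])^4\leq \tfrac{64\pi^2}{3}$, so it suffices to establish each of the three links separately. The first link, $16\,V(X,g_+)\leq 9\,\widetilde{\mathcal{E}}(\overline{X},[\ovg])^2$, is nothing but Theorem \ref{1-ReVol} rewritten: that theorem gives $V(X,g_+)\leq\tfrac{1}{144}\widetilde{\mathcal{E}}(\overline{X},[\ovg])^2$, and multiplying through by $144$ and dividing by $9$ yields exactly $16\,V(X,g_+)\leq 9\,\widetilde{\mathcal{E}}(\overline{X},[\ovg])^2$.

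For the middle link, $9\,\widetilde{\mathcal{E}}(\overline{X},[\ovg])^2\leq\mathcal{D}(\overline{X},[\overline{g}])^4$, I would invoke the eigenvalue estimate quoted just before Theorem \ref{Chi-ReVol}, namely $\lambda_1(D_{\hatg})^2{\rm Vol}(\overline{X},\hatg)^{2/(n+1)}\geq\tfrac{n+1}{4n}\widetilde{\mathcal{E}}(\overline{X},[\ovg])$ from \cite{Raulot2}, specialized to $n+1=4$, i.e. $n=3$, so that $\tfrac{n+1}{4n}=\tfrac{4}{12}=\tfrac13$ and the exponent $\tfrac{2}{n+1}=\tfrac12$. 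This reads $\lambda_1(D_{\hatg})^2{\rm Vol}(\overline{X},\hatg)^{1/2}\geq\tfrac13\,\widetilde{\mathcal{E}}(\overline{X},[\ovg])$ for every $\hatg\in[\ovg]$; squaring gives $\lambda_1(D_{\hatg})^4{\rm Vol}(\overline{X},\hatg)\geq\tfrac19\,\widetilde{\mathcal{E}}(\overline{X},[\ovg])^2$. Now $\lambda_1(D_{\hatg})^4{\rm Vol}(\overline{X},\hatg)=\big(\lambda_1(D_{\hatg}){\rm Vol}(\overline{X},\hatg)^{1/4}\big)^4$, so taking the infimum over $\hatg\in[\ovg]$ of the left-hand side produces precisely $\mathcal{D}(\overline{X},[\overline{g}])^4$ by the definition of this invariant, while the right-hand side is fixed; hence $\mathcal{D}(\overline{X},[\overline{g}])^4\geq\tfrac19\,\widetilde{\mathcal{E}}(\overline{X},[\ovg])^2$, which is the claimed middle inequality after multiplying by $9$.

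For the last link, $\mathcal{D}(\overline{X},[\overline{g}])^4\leq\tfrac{64\pi^2}{3}$, I would combine the previous chain with the upper bound for $\widetilde{\mathcal{E}}$ recorded in the Remark following Theorem \ref{1-ReVol}, namely $\widetilde{\mathcal{E}}(\overline{X},[\ovg])\leq\widetilde{\mathcal{E}}(\mathbb{S}^4_+,[g_{\mathbb{S}}])=8\sqrt{3}\pi$, so $9\,\widetilde{\mathcal{E}}(\overline{X},[\ovg])^2\leq 9\cdot 192\pi^2=1728\pi^2$. Wait — that needs a check: we have $\mathcal{D}(\overline{X},[\overline{g}])^4\geq 9\,\widetilde{\mathcal{E}}(\overline{X},[\ovg])^2$, which bounds $\mathcal{D}$ from below, not above, so the sphere bound on $\widetilde{\mathcal{E}}$ does not directly close the chain. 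Instead, the top inequality should come from the sharp upper bound $\lambda_1(D_{\mathcal{G}})^2{\rm Vol}^{2/(n+1)}\leq\tfrac{n+1}{4}\omega_{n+1}^{2/(n+1)}$ for the chirality-condition eigenvalue (the Hijazi-type upper bound on the round hemisphere, established in \cite{Raulot4}), which gives $\mathcal{D}(\overline{X},[\overline{g}])^4\leq\mathcal{D}(\mathbb{S}^4_+,[g_{\mathbb{S}}])^4$; evaluating the latter on the round hemisphere gives $\tfrac{64\pi^2}{3}$, consistently with $V(\mathbb{H},g_\mathbb{H})=\tfrac{4\pi^2}{3}$ through $16\cdot\tfrac{4\pi^2}{3}=\tfrac{64\pi^2}{3}$.

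The main obstacle is the top inequality $\mathcal{D}(\overline{X},[\overline{g}])^4\leq\tfrac{64\pi^2}{3}$: unlike the first two links, which are immediate algebraic rewritings of results stated verbatim in the excerpt, this one requires knowing that the spin conformal invariant $\mathcal{D}$ is maximized on the round hemisphere among conformal compactifications of Poincaré--Einstein manifolds, which is the substantive sharpness statement one must extract from \cite{Raulot4} (or re-derive via a test-spinor argument on the hemisphere combined with the conformal covariance of the Dirac operator under the chirality boundary condition). Once that upper bound on $\mathcal{D}$ is in hand, the corollary follows by simply concatenating the four displayed quantities in the stated order, each inequality being one of: Theorem \ref{1-ReVol} (rescaled), the estimate of \cite{Raulot2} (squared and infimized), and the hemisphere upper bound for $\mathcal{D}$ from \cite{Raulot4}.
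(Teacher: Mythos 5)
Your route is the same as the paper's: concatenate Theorem \ref{1-ReVol}, the estimate of \cite{Raulot2} displayed just before Theorem \ref{Chi-ReVol} (specialized to $n=3$ and then infimized over the conformal class), and the sharp upper bound for $\mathcal{D}$ from \cite[Theorem 9]{Raulot4}; the paper's own one-line proof cites only this last ingredient and treats the first two links as already established. You also correctly observe that the top bound cannot be obtained from the hemisphere bound on $\widetilde{\mathcal{E}}$, since the middle link bounds $\mathcal{D}$ from below, and that the genuinely new input is the maximality of $\mathcal{D}$ on the round hemisphere.

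However, your reconciliation of the derived constants with the printed ones is wrong, and this matters because the printed statement itself appears to carry a typo. Theorem \ref{1-ReVol} gives $V\leq\frac{1}{144}\widetilde{\mathcal{E}}^2$, hence $16V\leq\frac{1}{9}\widetilde{\mathcal{E}}^2$ --- not ``exactly'' $16V\leq 9\widetilde{\mathcal{E}}^2$ as you claim (the latter is only a strictly weaker consequence). More seriously, in the middle link you correctly derive $\mathcal{D}^4\geq\frac{1}{9}\widetilde{\mathcal{E}}^2$, but ``multiplying by $9$'' yields $9\mathcal{D}^4\geq\widetilde{\mathcal{E}}^2$, not $\mathcal{D}^4\geq 9\widetilde{\mathcal{E}}^2$; the latter is in fact false in general (on the round hemisphere $9\widetilde{\mathcal{E}}^2=9\cdot 192\pi^2$, whereas $\mathcal{D}^4=\frac{64\pi^2}{3}$). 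The chain your derivation actually establishes --- and the only one consistent with equality for the hyperbolic space, since $\frac{1}{9}\big(8\sqrt{3}\pi\big)^2=\frac{64\pi^2}{3}=16\,V(\mathbb{H},g_{\mathbb{H}})$ --- is
\begin{eqnarray*}
16\,V(X,g_+)\;\leq\;\frac{1}{9}\,\widetilde{\mathcal{E}}(\overline{X},[\ovg])^2\;\leq\;\mathcal{D}(\overline{X},[\overline{g}])^4\;\leq\;\frac{64\pi^2}{3},
\end{eqnarray*}
so the middle term of the corollary should be read as $\widetilde{\mathcal{E}}^2/9$ rather than $9\,\widetilde{\mathcal{E}}^2$. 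Trust your computation here instead of forcing it to match the printed constant.
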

\begin{proof}
The last inequality, which is the only thing left to prove, follows from \cite[Theorem 9]{Raulot4}. 
\end{proof}

%%%%%%%%%%%%%%%%%%%%%%%%%%%%%%%%%%%%%%%%%%%%%%%%%%%%%%%%%%%%%%%%

\bibliographystyle{alpha}
\bibliography{Biblio-PE} 

\end{document}